\theoremstyle{plain}
\newtheorem*{theorem*}{Theorem}
\DeclareMathOperator{\supp}{supp}
\newtheorem*{thm*}{Theorem} 
\newtheorem{thm}{Theorem} 
\newtheorem*{lemma*}{Lemma}
\newtheorem{lemma}{Lemma}
\newtheorem*{prop*}{Proposition}
\newtheorem*{corl*}{Corollary}
\newtheorem{corl}{Corollary}
\theoremstyle{definition}
\newtheorem*{obs*}{Observation}
\newtheorem*{remark*}{Remark}
\theoremstyle{definition}
\newcommand{\reals}{\mathbb{R}}
\begin{document}

\title[]{Oscillation of functions in the H\"{o}lder class}

\author[P.~Mozolyako]{Pavel Mozolyako}
\address{Universit\`{a} di Bologna, Dipartimento di Matematica, Piazza di Porta S. Donato, 40126 Bologna (BO)}
\email{pavel.mozolyako@unibo.it}
\thanks{The first author is supported  by the joint grant of Russian Foundation for Basic Research (project 17-51-150005-NCNI-a) and CNRS, France (project PRC CNRS/RFBR 2017-2019 "Noyaux reproduisants dans des espaces de Hilbert de fonctions analytiques"}
\author[A.~Nicolau]{Artur Nicolau}
\address{Universitat Aut\`{o}noma de Barcelona, Departament de Matem\`{a}tiques,\\
08193 Barcelona}
\email{artur@mat.uab.cat }
\thanks{The second author is supported in part by the Generalitat de Catalunya (grant 2017 SGR 395) and the Spanish Ministerio de Ciencia e Innovaci\'on (projects  MTM2017-85666-P and Maria de Maeztu Unit of Excellence MDM-2014-0445).}

\date{}


\begin{abstract}
    We study the size of the set of points where the $\alpha$-divided difference of a function in the H\"{o}lder class $\Lambda_\alpha$ is bounded below by a fixed positive constant. Our results are obtained from their discrete analogues which can be stated in the language of dyadic martingales. Our main technical result in this setting is a sharp estimate of the Hausdorff measure of the set of points where a dyadic martingale with bounded increments has maximal growth.    
\end{abstract}

\maketitle

    \section{Introduction}
    
    For $0< \alpha <1$ let $\Lambda_\alpha (\mathbb{R})$ be the H\"{o}lder class of functions $f: \mathbb{R} \rightarrow \mathbb{R} $ such that there exists a constant $C=C(f) >0$ with $|f(x) - f(y)| \leq C |x-y|^\alpha$ for any $x, y \in \mathbb{R}$. The infimum of such constants C is denoted by $\|f\|_\alpha$. For $b>1$, G.H. Hardy proved in \cite{Hardy} that the Weierstrass function
    \begin{equation}
        \label{eq:Weierstrass}
        f_{b, \alpha} (x) = \sum_{n=0}^\infty b^{- n \alpha} \cos (b^n x) ,  \quad  x \in \mathbb{R},  
    \end{equation}
is in $\Lambda_\alpha (\mathbb{R})$ and exhibits the extreme behavior
\begin{equation*}
    \limsup_{h \to 0} \frac{|f_{b, \alpha} (x + h) -  f_{b, \alpha} (x)|}{|h|^\alpha} >0
  \end{equation*}
at any point $x \in \mathbb{R} $. The main purpose of this note is to study the $\alpha$-divided differences defined as
\begin{equation}
        \label{eq:alphadivided}
        \Delta_{\alpha} (f) (x,h) =  \frac{f (x + h) -  f (x)}{|h|^{\alpha}} , 
\end{equation}
for functions $f \in \Lambda_\alpha (\mathbb{R})$. Let $\sigma$ be the standard Haar measure of $(0, \infty)$ defined as
$$
\sigma (E) = \int_E \frac{dh}{h} ,\quad E \subset (0, \infty) . 
$$
Hence $\sigma[h,1] = \log h^{-1}$, $0<h <1$. For $k=0,1,2,\dotsc$, let $\mathcal{D}_k$ be the collection of dyadic intervals in $\mathbb{R}$ of
generation or rank~$k$ of the form $[j2^{-k}, (j+1)
2^{-k})$ where $j$ is an integer. Let
$\mathcal{D}=\cup \mathcal{D}_k $ be
the collection of all dyadic intervals. For $0<s \leq 1$ let $H^s (E)$ denote the dyadic Hausdorff measure of the set $E \subset \reals$, that is, $H^s (E) = \lim_{\delta \to 0 } H^s_\delta (E)$, with 
\begin{equation*}
    H^s_\delta (E) = \inf \sum |I_j|^s , 
\end{equation*}
where the infimum is taken over all collections of dyadic intervals $\{I_j \}$ of length $|I_j| < \delta$ with $E \subset \bigcup I_j$. The Hausdorff dimension of $E$, denoted by $\dim (E)$, is the infimum of the indices $s>0$ such that $H^s (E) < \infty$. 
Let $f \in \Lambda_\alpha  (\reals) $. Roughly speaking, our first result says that at almost every point $x \in \reals$ either $ \Delta_{\alpha} (f) (x,h)$ is small for a significant set of scales $h>0$ or $ \Delta_{\alpha} (f) (x,h)$ oscillates around the origin infinitely often when $h$ tends to 0. 
\begin{thm}
\label{mean}
Let $0< \alpha < 1$ and $f \in \Lambda_\alpha (\reals)$. At almost every point $x \in \reals$ such that there exists a constant $\delta = \delta (x) >0$ with 
\begin{equation}\label{upper}
 \limsup_{h \to 0^+} \frac{\sigma\{t \in [h,1]: \Delta_{\alpha} (f) (x,t) > \delta \}}{\log {h}^{-1}} > 0, 
\end{equation} 
there exists a constant $c=c(x) >0$ such that
\begin{equation}\label{lower}
 \limsup_{h \to 0^+} \frac{\sigma\{t \in [h,1]: \Delta_{\alpha} (f) (x,t)< - c  \}}{\log {h}^{-1}} > 0.
\end{equation} 
\end{thm}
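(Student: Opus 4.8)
\emph{Proof strategy.}  Since the conclusion is local we fix a bounded dyadic interval and work there.  The plan is to attach to $f$ a dyadic martingale with bounded increments, to rephrase \eqref{upper} and \eqref{lower} in terms of it, and to finish with a large deviation (Azuma) estimate for such a martingale --- the latter being the soft version of the sharp Hausdorff--measure bound announced in the abstract.  For a dyadic interval $I$ write $\langle f\rangle_I=|I|^{-1}\int_I f$, and for a point $x$ let $I_k(x)$ be the dyadic interval of rank $k$ containing it.  Since $f\in\Lambda_\alpha$ one has $|\langle f\rangle_{I_{k+1}(x)}-\langle f\rangle_{I_k(x)}|\le C(\alpha)\|f\|_\alpha 2^{-k\alpha}$, so
\[
S_n(x)=\sum_{k=0}^{n-1}2^{k\alpha}\bigl(\langle f\rangle_{I_{k+1}(x)}-\langle f\rangle_{I_k(x)}\bigr)
\]
is a dyadic martingale with increments bounded by a constant $M=M(\alpha,\|f\|_\alpha)$.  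A summation by parts gives $S_n(x)=-(1-2^{-\alpha})\sum_{0<k<n}\tilde\psi_k(x)+O(\|f\|_\alpha)$, where $\tilde\psi_k(x)=2^{k\alpha}(\langle f\rangle_{I_k(x)}-f(x))$ is an averaged $\alpha$-divided difference of $f$ at $x$ and scale $2^{-k}$; thus the linear growth rate of $S_n(x)$ is controlled by the averages $n^{-1}\sum_{k<n}\tilde\psi_k(x)$.

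\emph{Translating the density conditions.}  This is the technical heart.  One must pass from the pointwise, one-sided differences $\Delta_\alpha(f)(x,t)$, $t\in[h,1]$, over a continuum of scales, to the quantities $\tilde\psi_k(x)$ over integer scales, handling three points together.  First, the continuum of scales may be replaced by a sufficiently fine $N$-adic family, because on each cell of such a mesh $t\mapsto\Delta_\alpha(f)(x,t)$ oscillates by at most $C\|f\|_\alpha N^{-\alpha}$; hence the $\sigma$-densities in \eqref{upper}, \eqref{lower} become densities of integer scales.  Second, $\tilde\psi_k(x)$ is a symmetric average over $I_k(x)\ni x$, whereas \eqref{upper}, \eqref{lower} are one-sided: this is dealt with by an equidistribution argument for the position of $x$ inside $I_k(x)$, together with averaging over translates of the grid, exhibiting a positive-density set of scales on which $\tilde\psi_k(x)$ is, up to an arbitrarily small error, a one-sided average of $\Delta_\alpha(f)(x,\cdot)$.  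Third, the additive errors incurred are only bounded a priori, but on these good scales they can be made as small as desired, and their effect is absorbed by the freedom in the constants $\delta(x),c(x)$, which need only be positive.  The outcome: for a.e.\ $x$ satisfying \eqref{upper} there are $\beta_0,\delta_0>0$ with $\#\{k<n:\tilde\psi_k(x)>\delta_0\}\ge\beta_0 n$ along a sequence $n=n_j(x)\to\infty$; while if \eqref{lower} fails at $x$ for every $c>0$, then $\#\{k<n:\tilde\psi_k(x)<-\varepsilon\}=o(n)$ for every $\varepsilon>0$.

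\emph{Closing the argument.}  Suppose, towards a contradiction, that the set $B$ of points satisfying \eqref{upper} but failing \eqref{lower} for every $c$ has positive measure, and pass to a positive-measure subset $B'\subset B$ on which the constants above are bounded below by fixed $\beta_*,\delta_*>0$.  Taking $\varepsilon$ small in terms of $\beta_*\delta_*$ and $\|f\|_\alpha$ and splitting $\sum_{k<n_j}\tilde\psi_k(x)$ over the ranges $\tilde\psi_k>\delta_*$, $-\varepsilon\le\tilde\psi_k\le\delta_*$, $\tilde\psi_k<-\varepsilon$, the two conclusions of the previous step give $\sum_{k<n_j}\tilde\psi_k(x)\ge\frac12\beta_*\delta_* n_j$ for $j$ large (depending on $x$), hence $S_{n_j}(x)\le-\gamma n_j$ with $\gamma=\frac14(1-2^{-\alpha})\beta_*\delta_*>0$.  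Thus, up to a null set, $B'\subset\{x:S_n(x)\le-\gamma n\text{ for infinitely many }n\}$.  On the other hand, as $S$ is a dyadic martingale with increments at most $M$ and $S_0=0$, Azuma's inequality gives $|\{x:S_n(x)\le-\gamma n\}|\le e^{-\gamma^2 n/(2M^2)}$, so by Borel--Cantelli the set on the right has measure zero, forcing $|B'|=0$, a contradiction.  Hence \eqref{lower} holds at a.e.\ point where \eqref{upper} holds.  (For this almost-everywhere statement only the Lebesgue consequence of the martingale estimate is used; sharpening it to a bound on the Hausdorff measure of $\{x:S_n(x)\ge\gamma n\text{ infinitely often}\}$ is the main technical result behind the finer assertions of the paper.)

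\emph{Main obstacle.}  The difficulty is concentrated in the second step: the natural identification of the one-sided pointwise divided differences with dyadic-martingale quantities is accurate only up to additive errors of size comparable to $\|f\|_\alpha$, so no scale-by-scale comparison works directly.  Overcoming this requires combining a fine auxiliary grid (making divided differences essentially locally constant in the scale), an equidistribution and translation-averaging argument (to produce a positive-density family of scales at which $x$ lies near the left endpoint of its cell, rendering the comparison one-sided and essentially lossless), and the freedom in the constants $\delta(x),c(x)$ --- after which the martingale large-deviation estimate does the rest.
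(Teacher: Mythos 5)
Your overall architecture (attach a bounded-increment dyadic martingale to $f$, do an accounting over scales, finish with a large-deviation estimate) matches the paper's, and your closing step is sound: for this almost-everywhere statement Azuma plus Borel--Cantelli is a legitimate soft substitute for the sharp Hausdorff bound of Theorem \ref{t:th2}. The summation-by-parts identity $S_n(x)=-(1-2^{-\alpha})\sum_k\tilde\psi_k(x)+O(\|f\|_\alpha)$ also checks out. The gap is exactly where you locate it, but it is worse than a missing technical verification: the two ``outcomes'' you assert for the translation step are false for the martingale you chose. The quantity $\tilde\psi_k(x)=2^{k\alpha}(\langle f\rangle_{I_k(x)}-f(x))$ averages $f(y)-f(x)$ over the \emph{whole} dyadic cell $I_k(x)$, hence over points $y<x$, about which the hypothesis \eqref{upper} and the negation of \eqref{lower} (which concern only $t>0$) say nothing. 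For instance, if near $x$ one has $f(x+t)-f(x)=\delta t^\alpha$ for $t>0$ and $f(x-t)-f(x)=-t^\alpha$ for $t>0$, then \eqref{upper} holds with density one and \eqref{lower} fails for every $c>0$, yet $\tilde\psi_k(x)<-\varepsilon$ for a positive density of $k$ (whenever $x$ sits in the right half of $I_k(x)$); so your claim ``\eqref{lower} fails $\Rightarrow\#\{k<n:\tilde\psi_k(x)<-\varepsilon\}=o(n)$'' is wrong, and the first claim fails for a symmetric reason. The proposed repair cannot work either: equidistribution of binary digits only makes $\tilde\psi_k$ an honest one-sided average on a set of scales of density $2^{-m}$, while on the complementary scales $\tilde\psi_k$ is merely bounded by $\|f\|_\alpha$; since those scales contribute $-O(n)$ to $\sum_{k<n}\tilde\psi_k(x)$, no positive-density subfamily of good scales can produce the linear lower bound you need. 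The same objection applies to averaging over translates of the grid.

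What the paper does instead is to work with a genuinely one-sided object from the start: the accumulated divided difference $\Theta_\varepsilon(f)(x)=\int_\varepsilon^1\bigl(f(x+h)-f(x)\bigr)h^{-\alpha}\,dh/h$, which integrates the \emph{same} integrand as in \eqref{upper}--\eqref{lower} against the \emph{same} measure $\sigma$. The accounting (Lemma \ref{Means}) is then immediate: splitting $[\varepsilon,1]$ into the sets where $\Delta_\alpha(f)(x,t)>\delta$, where it lies in $(-\eta,\delta]$, and where it is $\le-\eta$, the hypothesis and the failure of \eqref{lower} force $\limsup\Theta_\varepsilon(f)(x)/\log\varepsilon^{-1}>0$, and such points form a set of Hausdorff dimension $<1$. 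The nontrivial input — and the ingredient your proposal is missing — is the theorem from \cite{CLlN} (estimate \eqref{acotacio}) that this one-sided, non-dyadic quantity $\Theta_\varepsilon(f)$ stays within a bounded distance of a dyadic martingale with bounded increments; this rests on cancellation properties of the $dh/h$ average and is not recoverable from digit equidistribution. Without that bridge (or an equivalent one-sided construction), your scheme does not close.
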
   
For any $b>1$ and $0< \alpha <1$, the Weierstrass function $f_{b,\alpha}$ defined in \eqref{eq:Weierstrass} satisfies condition \eqref{upper} (and also \eqref{lower}) at any point $x \in \mathbb{R}$ for certain uniform constants $\delta = \delta (b, \alpha)$ and $c = c (b, \alpha)$. This is discussed after the proof of Theorem \ref{mean} at the end of Section 3. Observe that in both the assumption and conclusion of Theorem  \ref{mean}, one uses $\sigma$ to measure the set of scales where $\Delta_{\alpha} (f) (x,t)$ is not small.  Our next result shows that this is essential. 

\begin{thm}
\label{t:contraexemple}
Let $0<\alpha<1$. Then there exists a function $f \in \Lambda_\alpha (\reals)$ such that at almost every $x \in\reals$ one has
\[
\limsup_{ h \rightarrow 0^+} \Delta_{\alpha} (f) (x,h) >0
\]
and
\[
\liminf_{ h \rightarrow 0^+} \Delta_{\alpha} (f) (x,h) = 0 . \]
\end{thm}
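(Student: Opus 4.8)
The plan is to build $f$ explicitly as a lacunary‑type series of sawtooth functions supported at carefully spaced pairs of scales, chosen so that $f$ performs sharp ``rises'' (which force $\Delta_\alpha(f)(x,\cdot)$ to be positive at isolated scales) followed by ``gentle descents'' spread over a much longer range (whose contribution to $\Delta_\alpha(f)$ is negative but tends to $0$). Fix integers $p_1<m_1<p_2<m_2<\cdots$, put $\ell_i:=m_i-p_i$, and let $g_i$ be the $2^{-p_i}$‑periodic continuous piecewise linear function of amplitude $2^{-m_i\alpha}$ that rises linearly from $0$ to $2^{-m_i\alpha}$ on $[0,2^{-m_i}]$ and falls linearly back to $0$ on $[2^{-m_i},2^{-p_i}]$; set $f=\sum_{i\ge 1}g_i$. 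The parameters are chosen so that (a) the gaps between consecutive scale‑clusters are enormous, say $p_{i+1}=2^{m_i}$ (so $m_i$ grows tower‑exponentially and $m_i-p_i$ is negligible compared with $m_i$), while (b) $\ell_i\to\infty$ as slowly as possible subject to $\sum_i 2^{-\ell_i}=\infty$, e.g.\ $\ell_i=\lfloor\log_2(i+1)\rfloor$. Since the Lipschitz constant of $g_i$ equals its rising slope $2^{m_i(1-\alpha)}$ and $\|g_i\|_\infty=2^{-m_i\alpha}$, estimating $\sum_i|g_i(x)-g_i(y)|$ by splitting the sum at the scale $|x-y|$ and using (a) to see that each partial sum is dominated by its largest term yields $f\in\Lambda_\alpha(\reals)$ with $\|f\|_\alpha=O(1)$.

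For the $\limsup$: let $E_i$ be the set of $x$ lying in the first half of a rising interval of $g_i$, i.e.\ in $[0,\tfrac12 2^{-m_i}]$ modulo $2^{-p_i}$, so that $|E_i|=\tfrac12 2^{-\ell_i}$ and hence $\sum_i|E_i|=\infty$ by (b). The sets $E_i$ live at vastly different scales and are quasi‑independent, so a Borel--Cantelli argument shows that almost every $x$ lies in infinitely many $E_i$. If $x\in E_i$, take $h=2^{-m_i}$: the geometry of $g_i$ gives $g_i(x+h)-g_i(x)\ge\tfrac13 2^{-m_i\alpha}$, hence $\Delta_\alpha(g_i)(x,h)\ge\tfrac13$; moreover $g_{i'}(x+h)=g_{i'}(x)$ for every $i'>i$ (because then $h$ is an integer multiple of the period $2^{-p_{i'}}$), and $\sum_{i'<i}|\Delta_\alpha(g_{i'})(x,h)|\to 0$ by (a). Therefore $\Delta_\alpha(f)(x,2^{-m_i})\ge\tfrac14$ for infinitely many $i$, so $\limsup_{h\to0^+}\Delta_\alpha(f)(x,h)>0$ at a.e.\ $x$.

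For the $\liminf$: one first shows $\liminf_{h\to0^+}\Delta_\alpha(f)(x,h)\ge 0$ for every $x$. Partition a punctured neighbourhood of $0$ into intervals $K_i$, each surrounding the cluster of scales $[2^{-m_i},2^{-p_i}]$ belonging to $g_i$. For $h\in K_i$, the huge gaps (a) make the contribution of every $g_{i'}$ with $i'\neq i$ negligible, while $\Delta_\alpha(g_i)(x,h)$ is nonnegative when $x$ lies in a rising interval and is bounded below by $-2^{-\ell_i\alpha}$ on the descent --- this is exactly where it matters that the descent slope is smaller than the rising slope by the factor $2^{-\ell_i}$, so that the unavoidable negative part of $\Delta_\alpha(g_i)$ is $2^{-\ell_i\alpha}=o(1)$. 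Since $h\to0^+$ forces $i\to\infty$, this gives $\liminf\ge0$. For the reverse inequality, evaluate along $h=2^{-p_i}$: the period $2^{-p_i}$ of $g_i$ divides $h$, and so does the period of every finer $g_{i'}$, whence $g_{i'}(x+h)=g_{i'}(x)$ for all $i'\ge i$ and $\Delta_\alpha(f)(x,2^{-p_i})=2^{p_i\alpha}\sum_{i'<i}\bigl(g_{i'}(x+2^{-p_i})-g_{i'}(x)\bigr)\to 0$ by (a). Hence $\liminf_{h\to0^+}\Delta_\alpha(f)(x,h)=0$ for a.e.\ $x$.

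The main obstacle is the tension in the choice of $\ell_i$: it must grow to infinity so that the descent's negative contribution $2^{-\ell_i\alpha}$ vanishes (forcing $\liminf\ge 0$), yet slowly enough that $\sum_i 2^{-\ell_i}=\infty$ so that the rising intervals cover a.e.\ point infinitely often (forcing $\limsup>0$); reconciling these is the crux of the construction. The remaining work --- checking that at the resonant scales $h\approx 2^{-m_i}$ and $h\approx 2^{-p_i}$ the scales $g_{i'}$ with $i'\neq i$ never spoil the sign --- is routine but somewhat tedious, and is where the large gaps (a) are used. This also shows why $f$ does not contradict Theorem~\ref{mean}: positive values of $\Delta_\alpha(f)(x,\cdot)$ occur only on the scale‑clusters $[2^{-m_i},2^{-p_i}]$, whose total $\sigma$‑measure down to scale $2^{-m_I}$ is $\approx\sum_{i\le I}\ell_i=o(m_I)=o\bigl(\sigma[2^{-m_I},1]\bigr)$, so hypothesis \eqref{upper} fails for $f$.
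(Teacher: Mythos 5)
Your construction is correct, and it takes a genuinely different route from the paper. The paper first proves a purely dyadic statement (Lemma \ref{l:contraexemple}): a double-induction block construction produces a martingale $\{S_n\}$ with $\sup_n 2^{-n\beta}\|S_n\|_\infty<\infty$, $\limsup 2^{-n\beta}S_n>0$ a.e.\ and $2^{-n\beta}S_n\geq-\delta$ uniformly for $n$ large; the function $f$ is then defined by $f(b)-f(a)=2^{-n}S_n(I)$ on dyadic endpoints and the continuous estimates are transferred via Whitney decompositions of $[x,x+h)$. You instead build $f$ directly as a lacunary series of periodic sawtooths, which lets you bypass both the martingale formalism and the discrete-to-continuous transfer. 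The underlying combinatorial mechanism is the same in both arguments --- a sharp rise concentrated on a set of density $2^{-\ell_i}$, compensated by a gentle drift whose worst negative contribution to $\Delta_\alpha$ is $O(2^{-\ell_i\alpha})$, with $\ell_i\to\infty$ slowly enough that $\sum 2^{-\ell_i}=\infty$ (this is exactly the role of the paper's parameters $\delta_j=2^{-j-2}$, $M(\delta_j)$ and $n_j$, and of its ``$\delta_j$-special'' intervals) --- so the crux you identify is the same tension the paper resolves. What your approach buys is self-containedness and an everywhere (not just a.e.) identity $\liminf_{h\to0^+}\Delta_\alpha(f)(x,h)=0$, obtained cleanly from the uniform bound $\Delta_\alpha(g_i)(x,h)\geq-\min\bigl(s_ih^{1-\alpha},A_ih^{-\alpha}\bigr)\geq-2^{\alpha}2^{-\ell_i\alpha}$ together with the evaluation at $h=2^{-p_i}$; what it costs is having to verify by hand the H\"older bound and the scale-localization (that for each $h$ only one cluster contributes non-negligibly), which the paper gets structurally from the martingale normalization. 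Two points you should make explicit when writing this up: (i) the Borel--Cantelli step needs the quasi-independence $|E_i\cap E_j|\leq(1+o(1))|E_i||E_j|$ (which your periodicities with $p_{i+1}=2^{m_i}$ do give) plus either Kochen--Stone with constant tending to $1$ or a localization-to-dyadic-intervals/density-point argument to upgrade positive measure to full measure; and (ii) near the boundary between consecutive scale-clusters the coarser $g_{i-1}$ can contribute $O(1)$ in absolute value, so the $\liminf\geq0$ argument must rest on the sign-sensitive bound $\geq-C2^{-\ell_{i'}\alpha}$ for the (at most two) active indices rather than on outright negligibility.
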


The $\alpha$-divided differences $\Delta_{\alpha} (f) (x,h)$ of a function $f \in \Lambda_\alpha  (\reals) $ may oscillate as $h$ tends to $0$ at every point $x \in \reals$. However our next result says that the set of $x \in \reals$ where $\Delta_{\alpha} (f) (x,h)$ is bounded below by a positive constant as $h \to 0$, is always small in the sense that one can estimate its Hausdorff dimension.The statement of our results use the following entropy
\begin{equation}\label{entropy}
\Phi (\eta) = \frac{1+\eta}{2}\log_2\left(\frac{2}{1+\eta}\right)+\frac{1-\eta}{2}\log_2\left(\frac{2}{1-\eta}\right), \quad 0 < \eta < 1.
\end{equation} 
The entropy function $\Phi$ appears naturally in the study of the dimension of various sets and measures appearing in dynamical and probabilistic contexts. See the survey \cite{Heurteaux} and the references there. 

\begin{thm}
        \label{thm:continuous}
Let $0< \alpha < 1$ and $f \in \Lambda_\alpha  (\reals)$ with $\|f \|_{\alpha} \leq 1$. 

(a) For $0< \gamma < 1$, consider the set $G(\gamma)$ of points $x \in \reals$ such that
\begin{equation*}
    \limsup_{h \to 0^+} \frac{\sigma\{t \in [h,1]: \Delta_{\alpha} (f) (x,t) > \gamma \}}{\log h^{-1}} =1 . 
\end{equation*}
Then  $\dim G(\gamma) \leq \Phi (  \gamma )$.

(b)  For $0<\gamma<1$ consider the set $F(\gamma)$ of points $x \in \reals$ such that 
\[
 \liminf_{h\rightarrow0^+} \Delta_{\alpha} (f) (x,h) >\gamma.
\]
Then $\dim F(\gamma) \leq \Phi(  \gamma )$.
\end{thm}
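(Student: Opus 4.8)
The plan is to deduce everything from a statement about dyadic martingales with bounded increments. First, $\mathrm{(b)}$ reduces to $\mathrm{(a)}$: if $\liminf_{h\to0^+}\Delta_{\alpha}(f)(x,h)>\gamma$ then $\Delta_{\alpha}(f)(x,t)>\gamma$ for all $t$ in some interval $(0,h_0)$ with $h_0=h_0(x)\le1$, so for $0<h<h_0$
\[
\frac{\sigma\{t\in[h,1]:\Delta_{\alpha}(f)(x,t)>\gamma\}}{\log h^{-1}}\ \ge\ \frac{\sigma[h,h_0)}{\log h^{-1}}\ =\ 1-\frac{\log h_0^{-1}}{\log h^{-1}}\ \longrightarrow\ 1 ,
\]
while the left-hand ratio never exceeds $1$ since $\sigma[h,1]=\log h^{-1}$; hence $F(\gamma)\subseteq G(\gamma)$ and it suffices to bound $\dim G(\gamma)$. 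As Hausdorff dimension is countably stable and $\reals$ is a countable union of unit dyadic intervals, we work on $[0,1)$.

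The technical core is an estimate for the maximal growth of a dyadic martingale: if $\{S_n\}$ is a dyadic martingale on $[0,1)$ with $S_0=0$ and $|S_n-S_{n-1}|\le1$, then for $0<\beta<1$
\[
\dim\Bigl\{x:\limsup_{n\to\infty}\frac{S_n(x)}{n}\ge\beta\Bigr\}\le\Phi(\beta),
\]
and moreover $H^{\Phi(\beta)}$ of the set on which $S_n(x)\ge\beta n$ for all large $n$ is finite. This follows from a Cram\'er/Chernoff bound. Conditioned on $\mathcal F_{n-1}$ the increment $S_n-S_{n-1}$ is supported on two opposite values of modulus at most $1$ with equal probabilities, so $\mathbb E[e^{\mu S_n}]\le(\cosh\mu)^n$ for every $\mu>0$, and since $\{S_n\ge\beta n\}$ is a union of generation-$n$ dyadic intervals,
\[
\#\{I\in\mathcal D_n:S_n\ge\beta n\ \text{on}\ I\}=2^n\,\bigl|\{S_n\ge\beta n\}\bigr|\le\bigl(2e^{-\mu\beta}\cosh\mu\bigr)^n .
\]
Choosing the optimal $\mu$ (namely $\tanh\mu=\beta$), the Legendre transform of $\log\cosh$ at $\beta$ equals $(1-\Phi(\beta))\log 2$, so the bound becomes $2^{n\Phi(\beta)}$. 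Covering, for each $\delta>0$ and each large $N$, the set $\bigcup_{n\ge N}\{S_n\ge(\beta-\delta)n\}$ by these at most $2^{n\Phi(\beta-\delta)}$ intervals of length $2^{-n}$ gives $H^s_{2^{-N}}\le\sum_{n\ge N}2^{-n(s-\Phi(\beta-\delta))}\to0$ for $s>\Phi(\beta-\delta)$, whence $\dim\le\Phi(\beta-\delta)$ for every $\delta$ and then $\dim\le\Phi(\beta)$ by continuity of $\Phi$; using instead a single large generation $n$ for the set on which the growth condition holds from $n$ on yields $H^{\Phi(\beta)}\le1$.

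It remains to put $G(\gamma)$, up to a set of dimension zero, inside such a maximal-growth set. With $f$ one associates, for a dyadic interval $I$ with endpoints $a_I<b_I$ and halves $I^-,I^+$, the numbers $\varepsilon_I=(f(b_I)-f(a_I))/|I|^\alpha\in[-1,1]$ and $r_I=2^\alpha(\varepsilon_{I^+}-\varepsilon_{I^-})$; the telescoping identity for $f$ forces $\varepsilon_{I^-}+\varepsilon_{I^+}=2^\alpha\varepsilon_I$, so, writing $I_k(x)$ for the generation-$k$ dyadic interval containing $x$ and $\tau_k(x)=\pm1$ according to which half of $I_{k-1}(x)$ it is, one obtains the recursion $\varepsilon_{I_k(x)}=2^{\alpha-1}\varepsilon_{I_{k-1}(x)}+\xi_k(x)$ with $\xi_k(x)=2^{-\alpha-1}\tau_k(x)\,r_{I_{k-1}(x)}$ a martingale difference of modulus at most $1$. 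Thus $S_n=\sum_{k\le n}\xi_k$ is a dyadic martingale with bounded increments, and $\varepsilon_{I_n(x)}=2^{(\alpha-1)n}\varepsilon_{I_0(x)}+\sum_{k\le n}2^{(\alpha-1)(n-k)}\xi_k(x)$ is a geometrically weighted sum of its increments. The task is to show that membership of $x$ in $G(\gamma)$ forces $\limsup_n S_n(x)/n\ge\gamma$ off a zero-dimensional exceptional set; then the estimate above gives $\dim G(\gamma)\le\Phi(\gamma)$, and with $F(\gamma)\subseteq G(\gamma)$ the theorem follows.

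I expect this last step to be the main obstacle, for two linked reasons. For a non-dyadic base point $x$ the divided difference $\Delta_{\alpha}(f)(x,t)$ at a scale $t\asymp2^{-n}$ differs from $\varepsilon_{I_n(x)}$ by error terms of order one coming from the fractional position of $x$ inside $I_n(x)$, so the comparison must be carried out at well-chosen scales (or for well-chosen endpoints) and only on a large-density subset of generations. And, crucially, a naive passage from ``$\Delta_{\alpha}(f)(x,t)>\gamma$ on a $\sigma$-density-one set of scales'' to a growth condition on $\sum\xi_k$ only yields growth rate $(1-2^{\alpha-1})\gamma$, which would give the weaker bound $\Phi\bigl((1-2^{\alpha-1})\gamma\bigr)$; recovering the sharp rate $\gamma$ requires exploiting the density-one hypothesis to average the autoregressive recursion over the continuum of scales so that the contraction factor $2^{\alpha-1}$ is absorbed. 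The martingale estimate itself is then a soft counting argument, once the identity relating the Legendre transform of $\log\cosh$ to the entropy $\Phi$ is in hand.
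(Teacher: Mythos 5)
Your reduction of (b) to (a) via $F(\gamma)\subseteq G(\gamma)$ is correct and is exactly what the paper does, and your large-deviations proof of the martingale estimate is a valid alternative to the paper's argument (which instead constructs a Frostman-type measure $\mu$ with $\mu(I)\ge|I|^{\Phi(\eta)}$ on the relevant dyadic intervals, via the convexity inequality of Lemma~2.2); your Chernoff count gives the dimension bound $\Phi(\eta)$, which is all that Theorem~3 needs, though not the sharp statement $H^{\Phi(\eta)}(E(\eta))\le1$ of Theorem~4.

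The genuine gap is the one you flag yourself: the passage from ``$x\in G(\gamma)$'' to ``$\limsup_n S_n(x)/n\ge\gamma$ for a dyadic martingale with increments bounded by $1$'' is not carried out, and your chosen martingale cannot deliver it. Building $S_n$ from the single-scale quantities $\varepsilon_I=(f(b_I)-f(a_I))/|I|^\alpha$ ties the martingale to the dyadic endpoints only, so you lose both the non-dyadic base points and, as you compute, a factor $(1-2^{\alpha-1})$ in the growth rate, which would only yield $\Phi\bigl((1-2^{\alpha-1})\gamma\bigr)$. The paper's device for ``averaging the recursion over the continuum of scales'' is the accumulated divided difference $\Theta_\varepsilon(f)(x)=\int_\varepsilon^1\Delta_\alpha(f)(x,h)\,h^{-1}\,dh$: since $\|f\|_\alpha\le1$, the hypothesis defining $G(\gamma)$ (density one of scales where $\Delta_\alpha>\gamma$, and $\Delta_\alpha\ge-1$ elsewhere) immediately gives $\limsup_{\varepsilon\to0}\Theta_\varepsilon(f)(x)/\log\varepsilon^{-1}\ge(\gamma+1)-1=\gamma$ with the sharp constant, and a theorem of Castro--Llorente--Nicolau (cited as \eqref{acotacio} in Section~3) supplies a dyadic martingale $\{S_n\}$ with $\sup_n\sup\{\|\Theta_\varepsilon(f)-S_n\|_\infty:2^{-n-1}\le\varepsilon\le2^{-n}\}<\infty$. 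This martingale satisfies $\|S_n-S_m\|_\infty\le C+|n-m|$ rather than $\|S_{n+1}-S_n\|_\infty\le1$, and the bounded error $C$ is absorbed by the block-subsampling argument of Corollary~3 ($T_n=M^{-1}S_{Nn+k}$ with $M=N+C$, letting $N\to\infty$), which your write-up would also need. Without the accumulated difference (or an equivalent multi-scale comparison) the sharp exponent $\Phi(\gamma)$ is out of reach, so the proof is incomplete at its central step.
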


We first prove discrete versions of our results which will be stated in the language of dyadic martingales. For $x\in\mathbb{R}$ and $k \geq 0$
let $I_k (x)$ be the unique interval in $\mathcal{D}_k
$ which contains $x$.  Also $|E|$ denotes the Lebesgue measure
of the measurable set $E \subset \mathbb{R}$. A dyadic
martingale is a sequence of locally integrable
functions~$S=\{ S_k \}$ such that for any $k=0,1,2,\dotsc$,
the function $S_k$ is measurable with respect to the
$\sigma$-algebra ${\mathcal{F}}_k$ generated by $\mathcal{D}_k $ and
the conditional expectation of $S_{k+1}$ respect to $\mathcal{F}_{k}$
is $S_{k}$. In other words, for any $k=0,1,2,\dotsc$, the
function $ S_k $ is constant in each dyadic interval
of $\mathcal{D}_k$ and
\begin{equation}
\label{cancellation}
\int_I \left(S_{k+1}(x)-S_k (x)\right)\,dx=0
\end{equation}
for any $I\in \mathcal{D}_k$. The main idea of the proof of our results is to consider the dyadic martingale $\{S_n \}$ defined as 
\begin{equation}\label{quocientsincrementals}
S_n (x) = 2^{n} (f(b) - f(a)) , \quad  x \in [a,b) \in \mathcal{D}_n 
\end{equation}
and to establish discrete versions of our results. These discrete versions are based on the following estimate which has independent interest.    

Let $\{S_n \}$ be a dyadic martingale with bounded increments, that is, satisfying  $\|\{S_n \}\|_* = \sup_n \|S_{n+1} - S_n \|_\infty < \infty$. In the seminal paper \cite{Makarov}, Makarov used the subclass of Bloch martingales to study the boundary behavior of functions in the Bloch space and the metrical properties of harmonic measure in simply connected domains of the complex plane. It is clear that $\|S_n - S_0 \|_\infty \leq \|\{S_n \}\|_* n $. Our next result provides an estimate of the size of the set of points $x \in \reals$  where $S_n (x)$ grows as a proportion of $n$. 

\begin{thm}\label{t:th2}
Let $\{S_n\}$ be a dyadic martingale with $\|S_{n+1}-S_n\|_{\infty}\leq1$ for $n=1,2,\ldots$. For $0<\eta<1$ consider the set
\begin{equation}\label{e:21}
E(\eta)=\{x\in \reals :\limsup_{n\rightarrow\infty}\frac{S_n(x)}{n} \geq \eta\} .
\end{equation}
Then $H^{\Phi (\eta)} (E(\eta)) \leq 1$ and consequently $\dim (E(\eta)) \leq \Phi (\eta)$. 
\end{thm}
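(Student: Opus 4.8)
The plan is to run the classical exponential‑martingale argument, but with stopping at the \emph{exact} linear rate $\eta$, which is what produces the sharp constant. First I reduce to a single dyadic interval: since $S_0$ is constant on each $I\in\mathcal D_0$ and contributes nothing to $\limsup_n S_n/n$, I may subtract it and assume $S_0\equiv0$ on $I=[0,1)$, proving $H^{\Phi(\eta)}(E(\eta)\cap I)\le1$ (this one‑interval, normalized statement is the substantive content). For $\lambda>0$, convexity of $t\mapsto e^{\lambda t}$ on $[-1,1]$ gives $e^{\lambda t}\le\frac{1+t}2e^{\lambda}+\frac{1-t}2e^{-\lambda}$, which together with the cancellation \eqref{cancellation} yields $\mathbb E\!\left(e^{\lambda(S_{n+1}-S_n)}\mid\mathcal F_n\right)\le\cosh\lambda$; hence
\[
M_n(x):=e^{\lambda S_n(x)}(\cosh\lambda)^{-n}
\]
is a nonnegative supermartingale with $\int_{[0,1)}M_n\,dx\le1$ for all $n$. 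The tilt matched to the entropy is $\lambda=\lambda(\eta):=\tfrac12\log\frac{1+\eta}{1-\eta}$, for which $\tanh\lambda=\eta$, $\cosh\lambda=(1-\eta^2)^{-1/2}$, and a short computation gives the identity $\lambda\eta-\log\cosh\lambda=(1-\Phi(\eta))\log2$, i.e.\ $e^{\lambda\eta}(\cosh\lambda)^{-1}=2^{\,1-\Phi(\eta)}$.

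Next, fix $N$ and let $\tau=\tau_N$ be the first $n\ge N$ with $S_n(x)\ge\eta n$ (and $\tau=\infty$ if there is none). The intervals $I_{\tau(x)}(x)$, over $x$ with $\tau(x)<\infty$, form a family $\mathcal C_N$ of pairwise disjoint dyadic intervals of length $\le2^{-N}$ whose union is $A_N:=\{x\in[0,1):S_n(x)\ge\eta n\text{ for some }n\ge N\}$. If $I\in\mathcal C_N$ has generation $n$ then $\tau\equiv n$ and $S_n\ge\eta n$ on $I$, so by the identity above $M_n=e^{\lambda S_n}(\cosh\lambda)^{-n}\ge e^{\lambda\eta n}(\cosh\lambda)^{-n}=2^{\,n(1-\Phi(\eta))}=|I|^{-(1-\Phi(\eta))}$ on $I$, whence $|I|^{\Phi(\eta)}\le|I|\cdot M_n|_I=\int_I M_\tau\,dx$. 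Summing over the disjoint intervals of $\mathcal C_N$ and applying optional stopping and Fatou to $\{M_n\}$,
\[
\sum_{I\in\mathcal C_N}|I|^{\Phi(\eta)}\le\int_{A_N}M_\tau\,dx\le\liminf_{m\to\infty}\int_{[0,1)}M_{\tau\wedge m}\,dx\le\int_{[0,1)}M_0\,dx=1 .
\]
Since $\{x:S_n(x)\ge\eta n\text{ infinitely often}\}=\bigcap_NA_N$ and $2^{-N}\to0$, letting $N\to\infty$ gives $H^{\Phi(\eta)}\!\big(\{x:S_n(x)\ge\eta n\text{ i.o.}\}\big)\le1$, and with it $\dim\le\Phi(\eta)$ for this set.

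The remaining step, which I expect to be the harder one, is to pass from $\{S_n\ge\eta n\text{ i.o.}\}$ to the full set $E(\eta)$ defined by the \emph{closed} condition $\limsup_nS_n/n\ge\eta$. These differ on $E_2:=E(\eta)\setminus\{S_n\ge\eta n\text{ i.o.}\}=\{x:\limsup_nS_n/n=\eta\text{ and }S_n<\eta n\text{ eventually}\}$, the points at which $S_n$ approaches the rate $\eta$ from strictly below but never reaches the line $\eta n$ (think of $S_n-\eta n\sim-\log n$). The plan is to prove $H^{\Phi(\eta)}(E_2)=0$, which then gives $H^{\Phi(\eta)}(E(\eta))\le1$ and $\dim E(\eta)\le\Phi(\eta)$. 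The obstacle is that one cannot simply lower the stopping level to $(\eta-\delta)n$: repeating the computation above then produces the exponent $\Phi(\eta-\delta)>\Phi(\eta)$ (since $\Phi$ is decreasing), so that route only yields $\dim E_2\le\Phi(\eta)$, not the critical‑exponent measure bound. Instead one stops at the first $n\ge N$ with $D_n:=\eta n-S_n\le\delta n$ and keeps the two‑sided information $0<D_n\le\delta n$ available on $E_2$: on a stopping interval of generation $n$ this gives $M_n=e^{-\lambda(\eta)D_n}\,2^{\,n(1-\Phi(\eta))}$ with $D_n\in(0,\delta n]$, and one must exploit the factor $e^{-\lambda(\eta)D_n}<1$ — the quantitative content of ``$S_n$ stays below $\eta n$ at every scale'' — in a Borel--Cantelli/ballot‑type manner, letting $\delta\to0$ and $N\to\infty$ along a suitable diagonal sequence, to force $\sum_I|I|^{\Phi(\eta)}\to0$. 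This last reduction, squeezing out the measure (not just the dimension) at the residual set $E_2$, is where I expect the main technical work to lie; everything before it is the clean part.
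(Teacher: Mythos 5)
Your first two paragraphs are correct and, in substance, coincide with the paper's \emph{entire} proof. The paper builds the Frostman measure $\mu$ determined by the linear weights $\mu(I_{\pm})=\tfrac12\left(1+\eta\,\Delta S(I_{\pm})\right)\mu(I)$ and feeds it into an elementary covering lemma applied to the collection of dyadic $I$ with $S(I)\ge\eta\log_2(1/|I|)$; your supermartingale $e^{\lambda S_n}(\cosh\lambda)^{-n}$ with $\tanh\lambda=\eta$, the identity $e^{\lambda\eta}(\cosh\lambda)^{-1}=2^{1-\Phi(\eta)}$, and optional stopping are the exponential form of exactly the same computation (the paper's Lemma~\ref{l:32} plays the role of your identity plus convexity). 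Both arguments give $\sum_I|I|^{\Phi(\eta)}\le1$ over a disjoint dyadic cover of $\{x:S_n(x)\ge\eta n\text{ for some }n\ge N\}$, hence $H^{\Phi(\eta)}\bigl(\{x:S_n(x)\ge\eta n\ \text{i.o.}\}\bigr)\le1$. The paper stops there: it tacitly identifies this ``infinitely often'' set with $E(\eta)$, so the portion of the argument you have actually completed is everything the paper contains.

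The step you postpone is thus the only gap in your write-up, but your plan for it --- proving $H^{\Phi(\eta)}(E_2)=0$ for $E_2=E(\eta)\setminus\{S_n\ge\eta n\ \text{i.o.}\}$ --- cannot succeed, because $E_2$ can carry \emph{infinite} $H^{\Phi(\eta)}$-measure. Take the paper's own sharpness example $S_n(x)=\sum_{j\le n}(2x_j(x)-1)$ from \eqref{binary}, set $p=(1+\eta)/2$, fix $a\in(1/2,1)$ and a small $c>0$, and let $\nu=\bigotimes_j\mathrm{Bern}(q_j)$ with $q_j=p-c\,j^{a-1}$. By the law of the iterated logarithm, $\nu$-a.s.\ $k_n-pn=-(c/a)\,n^{a}(1+o(1))$, so $S_n(x)<\eta n$ eventually while $S_n(x)/n\to\eta$; hence $\nu(E_2)=1$. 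On the other hand, $-\log_2\nu(I_n(x))=\sum_{j\le n}h(q_j)+O(\sqrt{n\log\log n})$ with $h$ the binary entropy, and since $h'(p)<0$ and $q_j<p$ one gets $\sum_{j\le n}h(q_j)=n\Phi(\eta)+c'n^{a}(1+o(1))$ with $c'>0$; so $\nu(I_n(x))\le|I_n(x)|^{\Phi(\eta)}2^{-c'n^{a}/2}$ for all large $n$, uniformly on a set $A\subset E_2$ with $\nu(A)\ge1/2$ by Egorov. The mass distribution principle then yields $H^{\Phi(\eta)}_{2^{-N}}(A)\ge\tfrac12\,2^{c'N^{a}/2}\to\infty$, i.e.\ $H^{\Phi(\eta)}(E_2)=\infty$. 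The correct way to close your proof is therefore not to squeeze measure out of $E_2$ but to record that the sharp bound $H^{\Phi(\eta)}\le1$ pertains to $\{S_n\ge\eta n\ \text{i.o.}\}$ (which you have proved), and that for $E(\eta)$ as literally defined one obtains $\dim E(\eta)\le\Phi(\eta)$ by applying your estimate at every level $\eta'<\eta$ and using the continuity of $\Phi$ --- which is all that is needed for the applications in the paper.
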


The result is sharp in the sense that there are dyadic  martingales $\{S_n \}$ with $\|\{ S_n \}\|_* = 1 $ such that $\dim E (\gamma) = \Phi (\gamma)$. Actually we will show that  these examples correspond to the classical result of Besicovitch \cite{Besicovitch} (generalized later by Eggleston in \cite{Eggleston}) on the Hausdorff dimension of sets of real numbers which are defined by their digital expansions. Observe that if $f \in \Lambda_\alpha (\reals)$, the martingale $\{S_n \}$ defined in \eqref{quocientsincrementals} satisfies $ \sup_n 2^{- n (1 - \alpha)} \|S_n \|_\infty \leq \|f\|_\alpha $. Fix $0< \beta <1 $. Theorem \ref{t:th2} is used to study the size of the set of points of maximal growth of dyadic martingales $\{T_n \}$ satisfying the growth condition $\sup_n 2^{-n \beta} \|T_n \|_\infty <  \infty$. In particular we obtain discrete analogues of Theorems \ref{mean} and  \ref{thm:continuous} which are collected in the following statement.

\begin{corl}
        \label{corl:continuous}
Let $0< \beta < 1$ and let $\{T_n \}$ be a dyadic martingale  with $\sup_n 2^{- n \beta} \|T_n - T_{n-1} \|_{\infty} \leq 1$. 
\begin{enumerate}[(a)]
            \item For $0< \gamma < 1$, consider the set $G(\gamma)$ of points $x \in \reals$ such that
\begin{equation}
    \label{definicio G1}
    \limsup_{N \to \infty} \frac{1}{N} \# \{1 \leq k \leq N : 2^{- k \beta} T_k (x) \geq \gamma  \} =1 . 
\end{equation}
Then $H^{\Phi ( \gamma (1- 2^{ - \beta})) } ( G(\gamma) )\leq 1$  and consequently $dim G(\gamma) \leq \Phi( \gamma (1- 2^{ - \beta}))$.
     
        \item
         For $0<\gamma<1$ consider the set $F(\gamma)$ of points  $x \in \reals$ such that
\[
\liminf_{k \to \infty}  2^{- k \beta} T_k (x) \geq \gamma .
\]
Then $H^{\Phi ( \gamma (1- 2^{ - \beta})) } ( F(\gamma) ) \leq 1$  and consequently $dim F(\gamma) \leq \Phi( \gamma (1- 2^{ - \beta}))$.
\item
At almost every point $x \in \reals$ such that there exists a constant $\delta = \delta (x) >0$ with 
$$
 \limsup_{N \to \infty} \frac{1}{N} \#  \{1 \leq k \leq N : 2^{- k \beta} T_k (x) > \delta  \} > 0 ,  
$$
there exists a constant $c=c(x) >0$ such that
$$
 \limsup_{N \to \infty} \frac{1}{N} \#  \{1 \leq k \leq N : 2^{- k \beta} T_k (x) < -c   \} > 0 
$$
        \end{enumerate}
\end{corl}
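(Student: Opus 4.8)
The plan is to deduce all three parts from Theorem~\ref{t:th2} applied to a single auxiliary dyadic martingale built from $\{T_n\}$ by reweighting its increments geometrically. Write $d_j = T_j - T_{j-1}$, so $\|d_j\|_\infty \le 2^{j\beta}$, and set $U_n = \sum_{j=1}^{n} 2^{-j\beta} d_j$ for $n \ge 1$ and $U_0 = 0$. Since each $2^{-j\beta}d_j$ is a dyadic martingale difference, $\{U_n\}$ is a dyadic martingale, and $\|U_n - U_{n-1}\|_\infty = 2^{-n\beta}\|d_n\|_\infty \le 1$, so Theorem~\ref{t:th2} is available for $\{U_n\}$. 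Writing $s_n = T_n - T_0$, summation by parts gives the identity
\[
U_n = 2^{-n\beta} s_n + (1 - 2^{-\beta})\sum_{j=1}^{n-1} 2^{-j\beta} s_j ,
\]
while $\|d_j\|_\infty \le 2^{j\beta}$ forces $|2^{-n\beta}s_n(x)| \le (1-2^{-\beta})^{-1}$ for every $n$ and $x$. Since $T_0$ is constant on each interval of $\mathcal D_0$, dividing the identity by $n$ and letting $n \to \infty$ shows that, on each such interval,
\[
\frac{U_n(x)}{n} = (1 - 2^{-\beta})\, \frac1n \sum_{j=1}^{n-1} 2^{-j\beta} T_j(x) + o(1) ,
\]
i.e.\ $U_n(x)/n$ is, up to a vanishing error, the number $1-2^{-\beta}$ times the Ces\`{a}ro average of the quantities $2^{-j\beta}T_j(x)$ appearing in the statement. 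This is the only place where the exponent $\gamma(1-2^{-\beta})$ enters.

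\emph{Parts (a) and (b).} Let $E(\eta) = \{x : \limsup_n U_n(x)/n \ge \eta\}$; by Theorem~\ref{t:th2}, $H^{\Phi(\eta)}(E(\eta)) \le 1$ and $\dim E(\eta) \le \Phi(\eta)$. It suffices to show $G(\gamma) \subset E(\gamma(1-2^{-\beta}))$ and $F(\gamma) \subset E(\gamma(1-2^{-\beta}))$, after which (a) and (b) follow by monotonicity of Hausdorff measure. If $x \in G(\gamma)$, take $N_m \to \infty$ with $N_m^{-1}\#\{k \le N_m : 2^{-k\beta}T_k(x) \ge \gamma\} \to 1$: in the Ces\`{a}ro average at scale $N_m$ the $(1-o(1))N_m$ indices with $2^{-k\beta}T_k(x) \ge \gamma$ contribute at least $\gamma$ apiece, while the remaining $o(N_m)$ indices contribute at least $-(1-2^{-\beta})^{-1} - |T_0(x)|$ apiece, so the average is $\ge \gamma - o(1)$; by the approximation above, $\limsup_n U_n(x)/n \ge (1-2^{-\beta})\gamma$. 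If $x \in F(\gamma)$, then for every $\varepsilon > 0$ one has $2^{-k\beta}T_k(x) \ge \gamma - \varepsilon$ for all large $k$, so the Ces\`{a}ro averages satisfy $\liminf \ge \gamma - \varepsilon$, and again $\limsup_n U_n(x)/n \ge (1-2^{-\beta})\gamma$.

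\emph{Part (c).} Since $\Phi(\eta) < 1$ for $0 < \eta < 1$ ($\Phi$ is decreasing with $\Phi(0) = 1$), Theorem~\ref{t:th2} gives $\dim E(\eta) < 1$, hence $|E(\eta)| = 0$ for each rational $\eta \in (0,1)$; taking the union, $\limsup_n U_n(x)/n \le 0$ for almost every $x$. Fix such an $x$ for which the hypothesis of (c) holds with some $\delta = \delta(x) > 0$, and suppose for contradiction that $N^{-1}\#\{k \le N : 2^{-k\beta}T_k(x) < -c\} \to 0$ for every $c > 0$. Choose $N_m \to \infty$ with $N_m^{-1}\#\{k \le N_m : 2^{-k\beta}T_k(x) > \delta\} \to \rho > 0$. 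Splitting the Ces\`{a}ro average at scale $N_m$ according to whether $2^{-k\beta}T_k(x)$ is $> \delta$ (at least $(\rho - o(1))N_m$ indices, each $> \delta$), lies in $[-c,\delta]$ (each $\ge -c$), or is $< -c$ (only $o(N_m)$ indices, each $\ge -(1-2^{-\beta})^{-1} - |T_0(x)|$), the average is $\ge \delta\rho - c - o(1)$; letting $c \to 0$, $\liminf_m N_m^{-1}\sum_{j<N_m} 2^{-j\beta}T_j(x) \ge \delta\rho$. By the approximation above, $\liminf_m U_{N_m}(x)/N_m \ge (1-2^{-\beta})\delta\rho > 0$, contradicting $\limsup_n U_n(x)/n \le 0$. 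Hence at almost every $x$ satisfying the hypothesis of (c) the conclusion holds.

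\emph{Main obstacle.} Conceptually the only new ingredient is the choice of the weighted martingale $\{U_n\}$; once the identity $U_n(x)/n = (1-2^{-\beta})\,n^{-1}\sum_{j<n} 2^{-j\beta}T_j(x) + o(1)$ is in hand, everything is a reduction to Theorem~\ref{t:th2}. The work lies precisely in justifying this identity: one has to check that the boundary term $2^{-n\beta}s_n(x)$, the $T_0$-contribution, and the uncontrolled but uniformly bounded ``exceptional'' indices in each Ces\`{a}ro average are all $o(n)$, so that they disappear in the limit.
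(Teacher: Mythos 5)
Your proposal is correct and follows essentially the same route as the paper: your auxiliary martingale $U_n=\sum_{j\le n}2^{-j\beta}(T_j-T_{j-1})$ is exactly the paper's martingale $S_n$ from \eqref{e:T}, your summation-by-parts identity is \eqref{parts}, parts (a)--(b) reduce to Theorem \ref{t:th2} through the Ces\`aro-average set just as in Corollary \ref{corl1}, and part (c) is the same splitting/contradiction argument combined with the fact that $E(\tau)$ has dimension $<1$. The only (harmless) differences are cosmetic: you prove (b) directly rather than deducing it from (a), and you carry out the bookkeeping of the boundary and $T_0$ terms explicitly.
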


It is worth mentioning that the strategy of obtaining continuous results from their dyadic analogues has certain limitations. Fix $0 < \beta < 1$ and let $\{T_n \}$ be a dyadic martingale such that $\sup_n 2^{- n \beta} \|T_n \|_\infty < \infty$. Fern?ndez, Heinonen and Llorente proved the following $0-1$ Law: for any interval $I$ either  $\{T_n (x)\}$ converges at a set of points $x \in I$ of positive length or there exists a constant $C>0$ such that 
$$
M_{1- \beta} (\{x \in I : \lim_{n \to \infty } T_n (x) = + \infty  \}) > C |I|^{1- \beta}. 
$$
Here $M_{1 - \beta}$ denotes the $(1-\beta)$-Hausdorff content. See \cite{FenandezHeinonenLlorente}. However the continuous analogue of this result fails. Actually a H\"{o}lder continuous function may oscillate wildly around every point. 

\begin{thm}
\label{p:51}
Let $0 < \alpha < 1$. Then there exists a function $f\in \Lambda_{\alpha} (\reals)$ and a constant $C>0$ such that for any point $x\in \mathbb{R}$ there exist two  sequences $\{h_k\}_{k=1}^{\infty},\; \{h'_k\}_{k=1}^{\infty}$ of positive numbers, converging to zero, such that
\begin{equation}\label{e:551}
\begin{split}
&\limsup_{k\rightarrow\infty}\left|\frac{f(x+h'_k)-f(x)}{h'_k}\right| \leq 1, \\
&\liminf_{k\rightarrow\infty} \frac{|f(x+h_k)-f(x)|}{|h_k|^{\alpha}}> C.
\end{split}
\end{equation}
\end{thm}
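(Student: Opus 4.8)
The plan is to exhibit $f$ as the Weierstrass function $f_{b,\alpha}$ of \eqref{eq:Weierstrass} for a fixed $b>1$ (one may take $b=2$) and to deduce both conclusions from two facts about $f_{b,\alpha}$ already available in the paper: first, Hardy's theorem (quoted in the introduction) that $f_{b,\alpha}\in\Lambda_\alpha(\reals)$; second, the assertion recorded right after the statement of Theorem~\ref{mean} that $f_{b,\alpha}$ satisfies \emph{both} \eqref{upper} and \eqref{lower} at \emph{every} point $x\in\reals$ with uniform constants $\delta=\delta(b,\alpha)>0$ and $c=c(b,\alpha)>0$. With these in hand I would set $C:=\delta/2$ and argue pointwise.

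Fix $x\in\reals$. To get the roughness bound (the second line of \eqref{e:551}), I would first note that condition \eqref{upper} at $x$ forces the set $E_\delta:=\{t\in(0,1]:\Delta_\alpha(f)(x,t)>\delta\}$ to accumulate at $0$: if $E_\delta\subseteq[\varepsilon_0,1]$ for some $\varepsilon_0>0$ then $\sigma(E_\delta\cap[h,1])\le\sigma[\varepsilon_0,1]$ is bounded while $\log h^{-1}\to\infty$, so the quotient in \eqref{upper} would tend to $0$, a contradiction. Hence I may pick $h_k\to0^+$ with $h_k\in E_\delta$, so that $f(x+h_k)-f(x)>\delta h_k^{\alpha}$ and consequently
\[
\liminf_{k\to\infty}\frac{|f(x+h_k)-f(x)|}{|h_k|^{\alpha}}\ \ge\ \delta\ >\ C .
\]
The same accumulation argument applied to condition \eqref{lower} at $x$ produces a sequence $t_k\to0^+$ with $f(x+t_k)-f(x)<-c\,t_k^{\alpha}<0$.

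Finally, for the Lipschitz-type bound (the first line of \eqref{e:551}) I would combine the two sequences just obtained: the continuous function $h\mapsto f(x+h)-f(x)$ is strictly positive at the scales $h_k\to0^+$ and strictly negative at the scales $t_k\to0^+$, so it takes both signs in every interval $(0,\varepsilon)$, and by the intermediate value theorem it vanishes along some sequence $h'_k\to0^+$, i.e.\ $f(x+h'_k)=f(x)$. Then $|f(x+h'_k)-f(x)|/h'_k=0$, so $\limsup_k|\,(f(x+h'_k)-f(x))/h'_k\,|\le 1$ (in fact $=0$), as required. I do not see a genuine obstacle here: the one point that needs care is the deduction ``positive $\sigma$-density near $0$ $\Rightarrow$ accumulation at $0$'' (used twice above) and the fact that the constants $\delta,c$ are uniform in $x$, which is exactly why the single function $f_{b,\alpha}$ and the single constant $C$ work for all $x$ at once; the real work — showing that $f_{b,\alpha}$ satisfies \eqref{upper} and \eqref{lower} at every point — has already been carried out in Section~3. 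If one wishes the Lipschitz bound to be attained by a nonzero quantity, one may instead use $f=\ell+f_{b,\alpha}$ with $\ell$ any bounded $1$-Lipschitz function: this $f$ is still in $\Lambda_\alpha(\reals)$, the roughness estimate is unchanged because $|\ell(x+h)-\ell(x)|\le h=o(h^{\alpha})$, and along the scales $h'_k$ where $f_{b,\alpha}$ returns to its value one gets $|f(x+h'_k)-f(x)|/h'_k=|\ell(x+h'_k)-\ell(x)|/h'_k\le 1$.
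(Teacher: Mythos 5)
Your argument has a genuine gap, and in fact your proposed example fails the theorem. The whole proof rests on the assertion that $f_{b,\alpha}$ satisfies \emph{both} \eqref{upper} and \eqref{lower} at \emph{every} point $x$. That assertion is not what the paper actually establishes: the discussion at the end of Section~3 only shows $f_{b,\alpha}\in\Lambda_\alpha^*$, which yields condition \eqref{superior} \emph{or} \eqref{inferior} at \emph{almost every} point; the parenthetical sentence in the introduction must be read in that loose sense. Indeed \eqref{upper} fails outright at $x=0$ (for integer $b$): there $f_{b,\alpha}(h)-f_{b,\alpha}(0)=\sum_n b^{-n\alpha}(\cos(b^nh)-1)\le 0$ for all $h$, so the set $\{t:\Delta_\alpha(f)(0,t)>\delta\}$ is empty and your intermediate-value argument has no positive values to work with.

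Worse, the conclusion of Theorem~\ref{p:51} itself is false for $f_{2,\alpha}$ at $x=0$. For $0<h<\pi/2$ let $m$ be the least integer with $2^mh\ge\pi/2$; then $2^mh\in[\pi/2,\pi)$, so $1-\cos(2^mh)\ge 1$ and
\[
f_{2,\alpha}(0)-f_{2,\alpha}(h)\ \ge\ 2^{-m\alpha}\bigl(1-\cos(2^mh)\bigr)\ \ge\ \Bigl(\tfrac{2}{2\pi}\Bigr)^{\alpha}h^{\alpha}.
\]
Hence $|f_{2,\alpha}(h)-f_{2,\alpha}(0)|/h\gtrsim h^{\alpha-1}\to+\infty$, and \emph{no} sequence $h'_k\to0^+$ can satisfy the first line of \eqref{e:551}; adding a bounded $1$-Lipschitz function $\ell$ does not repair this, since $|\ell(h)-\ell(0)|\le h=o(h^\alpha)$. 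Such ``fast points'' are exactly the obstruction, and this is why the paper does not use a Weierstrass series but instead builds $f$ from a rarefied, superlacunary wavelet series: the gaps between the frequencies $k_n$ are chosen (condition \eqref{e:522}) so that at \emph{every} $x$ one can locate, among the extremal scales $r_m^{\pm},\rho_m^{\pm}$ of the tail $R_m$, both a scale with a large $\alpha$-increment and a scale where the increment vanishes or is Lipschitz-small. That case analysis is the real content of the proof and is absent from your proposal.
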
 

The paper is organized as follows. Section 2 contains the proof of Theorem \ref{t:th2} and Corollary \ref{corl:continuous}. In Section 3 we consider the accumulated $\alpha$-divided difference and deduce Theorems  \ref{mean} and \ref{thm:continuous} from Theorem \ref{t:th2}. Section 4 contains the proof of Theorem \ref{t:contraexemple}. Section 5 is devoted to the proof of Theorem \ref{p:51}. Finally Section 6 is devoted to another application of Theorem \ref{t:th2} to estimate the size of the set where a function in the Bloch space has maximal growth.

\section{Proof of Theorem \ref{t:th2}}
 The proof of Theorem \ref{t:th2} uses the following two  elementary auxiliary results. The first one is certainly well known but its short proof is included for the sake of completeness. 
 
\begin{lemma}\label{l:31}
Let $\mathcal{A}$ be a collection of dyadic intervals of the unit interval $[0,1]$. Let  $\mu$ be a finite positive Borel measure on $[0,1]$ such that $\mu (I) \geq |I|^s$ for any $I \in \mathcal{A}$. Let $E$ be the set of points which belong to infinitely many distinct intervals of the collection $\mathcal{A}$. Then $H^s (E) \leq  \mu ([0,1]) $. In particular $dim E \leq s$.
\end{lemma}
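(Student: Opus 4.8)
The plan is to combine a Borel--Cantelli-type covering of $E$ with the mass lower bound $\mu(I)\ge |I|^s$, exploiting the nested-or-disjoint structure of dyadic intervals to convert an arbitrary cover of $E$ into a pairwise disjoint one. Throughout, recall that there are only countably many dyadic intervals of $[0,1]$, so every subcollection of $\mathcal{A}$ is countable and all the sums below are well defined.

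Fix $\delta>0$ and put $\mathcal{A}_\delta=\{I\in\mathcal{A}:|I|<\delta\}$. I claim that $E\subset\bigcup_{I\in\mathcal{A}_\delta} I$. Indeed, if $x\in E$ then $x$ belongs to infinitely many distinct members of $\mathcal{A}$; since for each $k\ge 0$ there is exactly one interval of $\mathcal{D}_k$ containing $x$, these members have pairwise distinct lengths, hence lengths that are not bounded below, so all but finitely many of them lie in $\mathcal{A}_\delta$ and in particular at least one does. Next, let $\mathcal{A}_\delta^{*}$ be the subcollection of intervals of $\mathcal{A}_\delta$ that are maximal with respect to inclusion. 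Since for a fixed $I\in\mathcal{A}_\delta$ there are only finitely many dyadic intervals of $[0,1]$ of length $\ge |I|$, every $I\in\mathcal{A}_\delta$ is contained in some member of $\mathcal{A}_\delta^{*}$, so $\mathcal{A}_\delta^{*}$ still covers $E$. Moreover, any two distinct dyadic intervals are either nested or disjoint, so distinct members of $\mathcal{A}_\delta^{*}$ are pairwise disjoint.

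Since $\mathcal{A}_\delta^{*}\subset\mathcal{A}$ and its members have length $<\delta$, it is an admissible cover in the definition of $H^s_\delta(E)$, and using the hypothesis on $\mu$ together with the disjointness of $\mathcal{A}_\delta^{*}$,
$$
H^s_\delta(E)\le \sum_{I\in\mathcal{A}_\delta^{*}}|I|^s \le \sum_{I\in\mathcal{A}_\delta^{*}}\mu(I)=\mu\Big(\bigcup_{I\in\mathcal{A}_\delta^{*}}I\Big)\le \mu([0,1]).
$$
Letting $\delta\to 0$ gives $H^s(E)\le \mu([0,1])<\infty$, and this finiteness immediately yields $\dim E\le s$. (When $E=\emptyset$ the assertion is trivial.) There is no real obstacle here: the only points requiring a word of care are the two structural facts about dyadic intervals just used — that each point lies in a single dyadic interval per generation, so membership in infinitely many members of $\mathcal{A}$ forces membership in arbitrarily short ones, and that maximal intervals in $\mathcal{A}_\delta$ exist and are pairwise disjoint — after which everything is bookkeeping with the definition of Hausdorff measure.
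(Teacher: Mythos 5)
Your proof is correct and follows essentially the same route as the paper's: cover $E$ by the maximal intervals of $\{I\in\mathcal{A}:|I|<\delta\}$, use their pairwise disjointness to bound $\sum |I|^s$ by $\mu([0,1])$, and let $\delta\to 0$. The extra details you supply (why $E$ is covered by arbitrarily short members of $\mathcal{A}$, and why maximal elements exist) are accurate and merely make explicit what the paper leaves implicit.
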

\begin{proof}
We can assume that $\mathcal{A}$ has infinitely many different dyadic intervals. Fixed $\delta >0$, let $\mathcal{A} (\delta)$ be the collection of maximal dyadic intervals of $\mathcal{A}$ of length smaller than $\delta$. Observe that $E$ is contained in the union of the intervals of $\mathcal{A} (\delta)$. Hence
\[
H^s_{\delta} (E) \leq \sum_{I \in \mathcal{A} (\delta) } |I|^s . 
\]
By maximality the intervals of $\mathcal{A} (\delta)$ are pairwise disjoint. Hence the assumption gives that 
\[
 \sum_{I \in \mathcal{A} (\delta) } |I|^s \leq \mu ([0,1]),
\]
which finishes the proof. 
\end{proof}

\begin{lemma}\label{l:32}
Let $0 < \eta < 1$ and let $x_k \in \mathbb{R}$ with $|x_k | \leq 1$, $k=1,2,\ldots , N $. Assume that
\begin{equation}\label{e:321}
\sum_{k=1}^N x_k \geq \eta N . 
\end{equation}
Then 
$$
\prod_{k=1}^{N} \frac{1 + \eta x_k}{2} \geq 2^{- N \Phi (\eta)}
$$
\end{lemma}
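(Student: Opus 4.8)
The plan is to take logarithms and reduce the claim to a linear estimate coming from concavity. Since $|x_k|\le 1$ and $0<\eta<1$, every factor obeys $1+\eta x_k\ge 1-\eta>0$, so the product is positive and $\log_2$ may be applied. Expanding \eqref{entropy} by means of $\log_2\frac{2}{1\pm\eta}=1-\log_2(1\pm\eta)$ gives the identity
\[
1-\Phi(\eta)=\frac{1+\eta}{2}\log_2(1+\eta)+\frac{1-\eta}{2}\log_2(1-\eta),
\]
so the inequality of the lemma is equivalent to $\sum_{k=1}^N\log_2(1+\eta x_k)\ge N\bigl(1-\Phi(\eta)\bigr)$.

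To prove this, set $g(t)=\log_2(1+\eta t)$ for $t\in[-1,1]$. Since $g$ is concave (its second derivative equals $-\eta^2/((1+\eta t)^2\ln 2)<0$), it lies above the chord joining its endpoints:
\[
g(t)\ge\frac{g(1)+g(-1)}{2}+\frac{g(1)-g(-1)}{2}\,t=\frac12\log_2(1-\eta^2)+\frac{t}{2}\log_2\frac{1+\eta}{1-\eta},\qquad t\in[-1,1].
\]
The crucial point is that the slope $\frac12\log_2\frac{1+\eta}{1-\eta}$ is strictly positive (because $\eta>0$), so summing this bound over $t=x_k$ and invoking the hypothesis $\sum_k x_k\ge\eta N$ yields
\[
\sum_{k=1}^N g(x_k)\ge\frac{N}{2}\log_2(1-\eta^2)+\frac{\eta N}{2}\log_2\frac{1+\eta}{1-\eta}=N\!\left(\frac{1+\eta}{2}\log_2(1+\eta)+\frac{1-\eta}{2}\log_2(1-\eta)\right),
\]
where the last equality is a one-line rearrangement grouping the $\log_2(1+\eta)$ and $\log_2(1-\eta)$ terms. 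By the identity above this is exactly $N(1-\Phi(\eta))$, and the lemma follows.

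No genuine difficulty arises; the only point worth noticing is that Jensen's inequality points the wrong way here (concavity bounds $\sum g(x_k)$ from above in terms of the mean), so one must replace $g$ by its chord rather than by a tangent line. The estimate is in fact sharp: equality holds when each $x_k\in\{-1,+1\}$ and the value $+1$ is taken on a fraction $(1+\eta)/2$ of the indices, which is the extremal configuration that will later reproduce the Besicovitch--Eggleston examples.
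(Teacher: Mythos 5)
Your proof is correct, and it is essentially the same argument as the paper's: the paper bounds the convex function $t\mapsto\log_2\frac{2}{1+\eta t}$ from above by its chord over $[-1,1]$ and then uses the hypothesis $\sum_k x_k\geq\eta N$ together with the sign of the chord's slope, which under the identity $\log_2\frac{2}{1+\eta t}=1-\log_2(1+\eta t)$ is exactly your lower chord bound for the concave function $g(t)=\log_2(1+\eta t)$. The only difference is cosmetic normalization, and your sharpness remark matches the observation the paper makes right after the lemma.
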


It is worth mentioning that if $N_0 = N (1+ \eta)/2$ is an integer, the choice $x_k = 1$ for $1 \leq k \leq N_0$ and  $x_k = -1$ for $N_0 < k \leq N$, gives that
$$
\prod_{k=1}^{N} \frac{1 + \eta x_k}{2} =  2^{- N \Phi (\eta)}
$$

\begin{proof}
The convexity of the function $f(t) = \log_2 2(1 + \eta t)^{-1}$, $|t|<1$, gives that 
$$
\log_2 \frac{2}{1+ \eta x_k} \leq \frac {1+ x_k}{2} \log_2 \frac {2}{1+ \eta } + \frac {1- x_k}{2} \log_2 \frac {2}{1-  \eta }, \quad  k=1, \ldots , N. 
$$
Consider $X = \sum_{k=1}^N x_k$. Adding over $k$ in the previous estimate one obtains
$$
\sum_{k=1}^N \log_2 \frac{2}{1+ \eta x_k} \leq \frac{N+ X}{2} \log_2 \frac {2}{1+ \eta } + \frac {N- X}{2} \log_2  \frac {2}{1-  \eta }. 
$$
Applying (\ref{e:321}), one deduces 
$$
\sum_{k=1}^N \log_2 \frac{2}{1+ \eta x_k} \leq N \frac{1+ \eta}{2} \log_2 \frac {2}{1+ \eta } + N \frac {1- \eta}{2} \log_2 \frac {2}{1-  \eta } = N \Phi (\eta). 
$$
\end{proof}
We are now ready to prove Theorem \ref{t:th2}. 

\begin{proof}[Proof of Theorem \ref{t:th2}]

We can assume that $S_0 = 0$. If $I$ is a dyadic interval of length $2^{-n}$ we denote by $S(I)$ the constant value of $S_n$ at $I$, that is, $S(I) = S_n (x)$, $x \in I$. Consider the collection $\mathcal{A}$ of dyadic intervals $I$ such that $S(I) \geq \eta \log_2 (1/|I|)$. By Lemma \ref{l:31}, it is sufficient to construct a positive Borel measure $\mu$ on $[0,1]$ such that 
\begin{equation}\label{e:322}
\mu (I) \geq |I|^{\Phi (\eta)}, I \in \mathcal{A}. 
\end{equation}
 The measure $\mu$ is constructed inductively by prescribing its mass on every dyadic interval. Define $\mu([0,1]) = 1$. Let $I$ be a dyadic interval and assume $\mu(I)$ has been defined. Let $I_-$ and $I_+$ be the two dyadic intervals contained in $I$ of length $|I_-|=|I_+|= |I|/2$. Denote by $\Delta S (I_-)$ (respectively $\Delta S (I_+)$) the jump of the martingale at $I_-$ (respectively $I_+$), that is, $\Delta S (I_-) = S(I_- ) - S(I)$ ( respectively  $\Delta S (I_+) = S(I_+ ) - S(I)$). Define
$$
\mu (I_- ) = \frac{1 + \eta \Delta S (I_-) }{2} \mu (I) \quad \text { and } \quad  \mu (I_+ ) = \frac{1 + \eta \Delta S (I_+) }{2} \mu (I) \, .
$$
This defines a probability measure $\mu$ on $[0,1]$. Given a dyadic interval $I$ of length $2^{-N}$ and $0 \leq k \leq N$, let $I_k$ be the unique dyadic interval of length $2^{-k}$ which contains $I$. Then 
$$
\mu (I) = \prod_{k=1}^N \frac{1 + \eta (S(I_k) - S(I_{k-1}))}{2} \, .
$$
Observe that
$$
S (I)  =  \sum_{k=1}^N (S(I_k) - S(I_{k-1})).  
$$
Then the estimate (\ref{e:322}) follows from Lemma \ref{l:32}. 
 \end{proof}

Let us now discuss the sharpness in Theorem \ref{t:th2}. Let $\{x_n(x)\},\; n=1,2,\dots$ be the sequence of binary digits of the point $x\in [0,1]$. Consider the dyadic martingale $\{S_n\}$ defined as $S_0 \equiv 0$, and 
\begin{equation}
\label{binary}
S_n(x) := 2 \sum_{k=1}^{n}\left(x_k(x) - \frac12\right),\quad n=1,2,\dots.
\end{equation}
So, if $k_n (x)$ is the number of ones in the first $n$ binary digits of $x$, then $S_n (x) = 2 k_n (x) - n$. For $0< \eta < 1$ we have
\begin{equation*}
 \left\{x \in [0,1] :\; \limsup_{n\rightarrow\infty}\frac{S_n(x)}{n} =  \eta\right\} =  \left\{x \in [0,1] :\; \limsup_{n\rightarrow\infty}\frac{k_n(x)}{n} = \frac{1 + \eta}{2} \right\}
\end{equation*}
and it is a classical result of Besicovitch that this set has Hausdorff dimension $\Phi (\eta)$. See \cite{Besicovitch}. Actually it is also known that
\begin{equation*}
   \dim \left\{x:\; \limsup_{n\rightarrow\infty}\frac{S_n(x)}{n}\geq \eta\right\} = \dim \left\{x:\; \liminf_{n\rightarrow\infty}\frac{S_n(x)}{n}\geq\eta\right\}  = \Phi (\eta).
\end{equation*}
See \cite{CCC} and the references there. 

Fix $0 < \beta < 1$. We will be interested in dyadic martingales $\{T_n \}$ satisfying the growth condition $\|\{T_n \}\|_\beta = \sup_n 2^{- n \beta} \|T_n \|_\infty < \infty$. However it will be more convenient to use the equivalent condition 
\begin{equation}\label{equivalentnorm}
\|\{T_n \}\|_{\beta, *} = \sup_n 2^{- n \beta} \|T_n - T_{n-1} \|_\infty < \infty. 
\end{equation}
Observe that $\|\{T_n \}\|_{\beta, *} \leq (1+ 2^{-\beta}) \|\{T_n \}\|_\beta$ and writing $T_n - T_0$ as sum of $T_k - T_{k-1}$, we deduce that $\|\{T_n - T_0 \}\|_\beta \leq 2^\beta (2^\beta - 1)^{-1} \|\{T_n \}\|_{\beta, *} $. We start with the following consequence of Theorem \ref{t:th2}.  

\begin{corl}
\label{corl1}
Fix $0< \beta < 1$. Let $\{T_n\}$ be a dyadic martingale with $\|\{T_n \}\|_{\beta, *} \leq 1$. 
For $0< \gamma < 1$, consider the set $H(\gamma)$ of points $x \in \reals$ such that
\begin{equation}
    \label{definicio F}
    \limsup_{N \to \infty} \frac{1}{N} \sum_{k=1}^N 2^{-k \beta} T_k (x) \geq \gamma . 
\end{equation}
Then $H^{\Phi (\gamma (1- 2^{-\beta}))} (H(\gamma)) \leq 1$ and consequently $\dim H(\gamma) \leq \Phi (\gamma (1- 2^{-\beta}))$. 
\end{corl}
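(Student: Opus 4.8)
The plan is to reduce Corollary~\ref{corl1} to Theorem~\ref{t:th2} by extracting, from the weighted averages $\frac1N\sum_{k=1}^N 2^{-k\beta}T_k(x)$, an honest dyadic martingale with unit increments whose maximal linear growth controls the set $H(\gamma)$. The natural candidate is $S_n = \sum_{k=1}^n (T_k - T_{k-1})$, but this is just $T_n - T_0$ and does not have bounded increments; instead one should normalize scale by scale. Writing $\xi_k := 2^{-k\beta}(T_k - T_{k-1})$, the hypothesis $\|\{T_n\}\|_{\beta,*}\le 1$ gives $\|\xi_k\|_\infty \le 1$, and $S_n := \sum_{k=1}^n \xi_k$ is a dyadic martingale with $\|S_{n+1}-S_n\|_\infty \le 1$, so Theorem~\ref{t:th2} applies to it verbatim.

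The heart of the argument is then a deterministic (Abel summation) comparison between $\frac1N\sum_{k=1}^N 2^{-k\beta}T_k(x)$ and $\frac1N S_N(x)$. First I would use telescoping: $2^{-k\beta}T_k = \sum_{j=1}^k 2^{-k\beta}(T_j - T_{j-1}) = \sum_{j=1}^k 2^{-(k-j)\beta}\xi_j$ (assuming $T_0 = 0$, which we may after replacing $T_n$ by $T_n - T_0$; this changes the averages by $2^{-k\beta}T_0 \to 0$ and hence does not affect the $\limsup$). Summing over $k$ and switching the order of summation,
\begin{equation*}
\frac1N\sum_{k=1}^N 2^{-k\beta}T_k(x) = \frac1N\sum_{j=1}^N \xi_j(x)\sum_{k=j}^N 2^{-(k-j)\beta} = \frac1N\sum_{j=1}^N \xi_j(x)\,w_{N,j},
\end{equation*}
where $w_{N,j} = \sum_{i=0}^{N-j}2^{-i\beta}$ satisfies $1 \le w_{N,j} \le (1-2^{-\beta})^{-1}$ and, crucially, $w_{N,j} \to (1-2^{-\beta})^{-1}$ uniformly for $j \le N - M$ as $M\to\infty$. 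Thus the weighted average equals $(1-2^{-\beta})^{-1}\cdot \frac1N S_N(x)$ up to an error coming from the last $O(M)$ terms (bounded by $CM/N \to 0$) plus a term controlled by the tail $\sum_{j\le N-M}|\xi_j|\,|w_{N,j} - (1-2^{-\beta})^{-1}|/N$, which is small uniformly once $M$ is large since $|\xi_j|\le 1$ and $|w_{N,j} - (1-2^{-\beta})^{-1}| \le C 2^{-(N-j)\beta}$. A clean way to phrase this: for every $\varepsilon>0$ there is $N_0$ so that $\big|\frac1N\sum_{k=1}^N 2^{-k\beta}T_k(x) - (1-2^{-\beta})^{-1}\frac{S_N(x)}{N}\big| < \varepsilon$ for all $N \ge N_0$, uniformly in $x$. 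Consequently, if $x \in H(\gamma)$ then $\limsup_N S_N(x)/N \ge (1-2^{-\beta})(\gamma - \varepsilon)$ for every $\varepsilon$, hence $\ge \gamma(1-2^{-\beta}) =: \eta$, so $H(\gamma) \subseteq E(\eta)$ in the notation of Theorem~\ref{t:th2}.

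Applying Theorem~\ref{t:th2} to $\{S_n\}$ with this $\eta = \gamma(1-2^{-\beta}) \in (0,1)$ gives $H^{\Phi(\eta)}(E(\eta)) \le 1$, and since $H(\gamma)\subseteq E(\eta)$ and Hausdorff measure is monotone, $H^{\Phi(\gamma(1-2^{-\beta}))}(H(\gamma)) \le 1$; the dimension bound follows immediately. I expect the only genuinely delicate point to be the uniformity of the Abel-summation error estimate — specifically, confirming that the tail contribution $\frac1N\sum_{j=1}^{N}|\xi_j(x)|\,\big|w_{N,j} - (1-2^{-\beta})^{-1}\big|$ is $o(1)$ independently of $x$; this is where the exponential decay $|w_{N,j} - (1-2^{-\beta})^{-1}| = \sum_{i>N-j}2^{-i\beta} \le C 2^{-(N-j)\beta}$ together with $\sum_{j} 2^{-(N-j)\beta} \le C$ does the work, so it is routine but should be written out carefully. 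Everything else is bookkeeping: reducing to $T_0 = 0$, checking $\|\xi_k\|_\infty \le 1$, and invoking the already-proved Theorem~\ref{t:th2}.
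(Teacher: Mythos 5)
Your proposal is correct and follows essentially the same route as the paper: you build the identical martingale $S_n=\sum_{k\le n}2^{-k\beta}(T_k-T_{k-1})$, and your interchange of summation with the weights $w_{N,j}$ is just the paper's summation-by-parts identity $S_n=(1-2^{-\beta})\sum_{k=1}^{n-1}2^{-k\beta}T_k+2^{-n\beta}T_n-2^{-\beta}T_0$ written out term by term, after which both arguments conclude $H(\gamma)\subseteq E(\gamma(1-2^{-\beta}))$ and invoke Theorem \ref{t:th2}. The uniformity worry you flag is harmless — the error is in fact $O(1/N)$ since $\sum_j 2^{-(N-j)\beta}$ is bounded — so the argument goes through as written.
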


\begin{proof}
Consider the dyadic martingale $\{S_n \}$ defined by $S_0 = 0$ and 
\begin{equation}\label{e:T}
S_n (x) = \sum_{k=1}^n 2^{-k \beta} (T_k (x) - T_{k-1} (x)), \quad x \in \reals , n=1,2, \ldots
\end{equation}
Summation by parts gives that 
\begin{equation}\label{parts}
S_n (x) = (1- 2^{- \beta}) \sum_{k=1}^{n-1} 2^{- k \beta} T_k (x) + 2^{-n \beta} T_n (x) - 2^{- \beta} T_0 (x) , \quad n \geq 1. \end{equation}
Hence if $x \in H(\gamma) $ we have $\limsup n^{-1} S_n (x) \geq \gamma (1- 2^{- \beta})$. Since $\|S_n - S_{n-1}\|_\infty \leq 1$ for $n=1,2,\ldots$, Theorem \ref{t:th2} gives that $H^{\Phi (\gamma (1- 2^{-\beta}))} (H(\gamma)) \leq 1 $. 
\end{proof}

Let us now discuss the sharpness in Corollary \ref{corl1}. Actually we will show that there exists a dyadic martingale $\{T_n \}$ with $\|\{T_n \}\|_{\beta , * } =1$ for which $\dim H(\gamma) = \Phi (\gamma (1- 2^{- \beta}))$. If $\{S_n \}$ is the dyadic martingale defined in \eqref{binary}, consider the dyadic martingale $\{T_n \}$ defined by $T_0 =0$ and 
$$
T_n = \sum_{k=1}^n 2^{ k \beta} (S_k - S_{k-1}) , \quad n=1,2,\ldots
$$
Hence $T_n - T_{n-1} = 2^{n \beta} (S_n - S_{n-1})$, $n=1,2, \ldots$ and 
$$
S_n - S_0 = \sum_{k=1}^n 2^{- k \beta} (T_k - T_{k-1}) . 
$$
By \eqref{parts}, $x \in H(\gamma)$ if and only if 
$$
\limsup_{n \to \infty} \frac{S_n (x)}{n} \geq \gamma (1- 2^{-\beta}). 
$$ 
So, the classical result \cite{Besicovitch} mentioned above, give that $\dim H(\gamma) = \Phi (\gamma(1- 2^{-\beta}))$. 

\begin{corl}
\label{noucor}
Let $\{S_n \}$ be a dyadic martingale. Assume that there exists a constant $C>0$ such that 
\begin{equation*}
    \|S_n - S_m\|_\infty \leq C + |n-m| , \quad n,m = 1,2, \ldots
\end{equation*}
For $0 < \eta < 1$ consider the set
\begin{equation*}
    E(\eta) = \{x \in \mathbb{R} : \limsup_{n \to \infty} \frac{S_n (x)}{n} \geq \eta \} .
\end{equation*}
Then $\dim E(\eta) \leq \Phi (\eta)$. 
\end{corl}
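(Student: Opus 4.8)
The plan is to derive this from Theorem \ref{t:th2} by passing to a coarse time scale. The crude consequence $\|S_{n+1}-S_n\|_\infty\le C+1$ of the hypothesis only lets one apply Theorem \ref{t:th2} to $\{S_n/(C+1)\}$ and conclude $\dim E(\eta)\le\Phi(\eta/(C+1))$, which is strictly weaker than what is claimed; the real content of the bound $\|S_n-S_m\|_\infty\le C+|n-m|$ is that on large time scales the increments behave as if bounded by $1$. After enlarging $C$ (using the hypothesis with $m=0$) one also gets the strip bound $|S_n(x)|\le n+C$ for all $n,x$, so $\limsup_n S_n(x)/n\le 1$ everywhere, and by the monotonicity of $E(\cdot)$ together with the continuity of $\Phi$ it is enough to prove $\dim E(\eta)\le\Phi(\eta-\varepsilon)$ for every small $\varepsilon>0$.

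Fix $\varepsilon$ and an integer $L$ with $C/L<\varepsilon$, and sample the martingale along multiples of $L$: put $\tilde S_j:=S_{jL}$, a $2^L$-adic martingale with $\|\tilde S_j-\tilde S_{j-1}\|_\infty\le C+L\le(1+\varepsilon)L$ and, crucially, with the inherited strip bound $\|\tilde S_n-\tilde S_m\|_\infty\le C+L|n-m|$. Since $|S_n(x)-\tilde S_j(x)|\le C+L$ whenever $jL\le n<(j+1)L$, one checks (dividing by $n$ and letting $n\to\infty$, using that on $E(\eta)$ the relevant limsup is $\ge\eta>0$) that $\limsup_n S_n(x)/n=\limsup_j\tilde S_j(x)/(jL)$, so $E(\eta)=\{x:\limsup_j\tilde S_j(x)/(jL)\ge\eta\}$. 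The problem is thus reduced to estimating the dimension of the set of maximal growth points of the $2^L$-adic martingale $\tilde S$.

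That estimate is carried out by the method of Theorem \ref{t:th2}: fix $\eta''\in(\eta-\varepsilon,\eta)$, let $\mathcal{A}$ be the collection of $2^L$-adic intervals $I$ (with $|I|=2^{-jL}$) such that $\tilde S(I)\ge\eta'' jL$, so that $E(\eta)$ is contained in the set of points belonging to infinitely many intervals of $\mathcal{A}$; then build a probability measure $\mu$ on $[0,1]$ by distributing mass among the $2^L$ children of each $2^L$-adic interval with weights proportional to $1+a\,\Delta\tilde S(\cdot)$, where $a>0$ is chosen so small that $a(1+\varepsilon)L<1$ (the weights are then positive and sum to one because the increments have mean zero). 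As in the proof of Theorem \ref{t:th2}, a product formula for $\mu(I)$ together with the convexity estimate of Lemma \ref{l:32} and Lemma \ref{l:31} yields $\mu(I)\ge|I|^{s}$ on $\mathcal{A}$ and hence $\dim E(\eta)\le s$ for an explicit exponent $s=s(L,a)$.

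The main obstacle is to make this final step produce the sharp exponent. Carried out naively — a single linear weight on the $2^L$-ary splitting, using only the bound on the block increments — the resulting $s(L,a)$ does not tend to $\Phi(\eta-\varepsilon)$ as $L\to\infty$; a plain $2^L$-adic analogue of Theorem \ref{t:th2} is weaker than its base-$2$ version. The missing ingredient is the inherited strip bound $\|\tilde S_n-\tilde S_m\|_\infty\le C+L|n-m|$, which rules out the extremal $2^L$-adic configurations: one must either encode it into the choice of the weights of $\mu$, or, more transparently, construct the splitting of $\mu$ inside each depth-$L$ block out of the fine dyadic martingale $\{S_n\}$ on that block (where, by the strip bound, the increments are again essentially of unit size on the relevant scale) rather than treating the block as one $2^L$-ary step. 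Verifying that this refined construction gives an exponent converging to $\Phi(\eta)$ as $L\to\infty$ and $\varepsilon\to0$ is the heart of the argument.
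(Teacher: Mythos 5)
Your reduction to the coarse martingale $\tilde S_j=S_{jL}$ is the same move the paper makes, and your diagnosis of where it gets stuck is accurate: a $2^L$-adic splitting with the linear weight $\bigl(1+a\,\Delta\tilde S\bigr)/2^L$ extracts only one ``bit'' of entropy per block of $L$ generations, so the exponent it produces is of the form $1-\frac{1-\Phi(\eta')}{L}$, which tends to $1$, not to $\Phi(\eta)$, as $L\to\infty$. This is precisely the step the paper's own proof passes over: there one writes $E(\eta)=\bigcup_k E_k$ with $E_k=\{x:\limsup_n S_{Nn+k}(x)/(Nn)\ge\eta\}$ and applies Theorem \ref{t:th2} verbatim to $T_n=M^{-1}S_{Nn+k}$, $M=N+C$; but $T_n$ is constant on dyadic intervals of generation $Nn+k$, not $n$, so it is not a dyadic martingale in the sense used in the proof of Theorem \ref{t:th2} (where the index is identified with the generation), and the conclusion $\dim E_k\le\Phi(N\eta/M)$ does not follow. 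So you have correctly located the gap, but you have not closed it: the ``heart of the argument'' is deferred to an unspecified refined construction.

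That remaining step cannot be carried out, because the statement is false as it stands. Take $S_0=0$, $S_{2j+1}=S_{2j}$, and at each even generation give the two children of every dyadic interval the increments $+2$ and $-2$. Any window $(m,n]$ contains at most $\frac{n-m}{2}+1$ even indices, so $\|S_n-S_m\|_\infty\le (n-m)+2$ and the hypothesis holds with $C=2$. On the other hand $S_{2n}(x)=2\sum_{j\le n}\sigma_j(x)$, where $\sigma_j(x)=\pm1$ is determined by the $2j$-th binary digit of $x$ and the odd digits are unconstrained; hence $E(\eta)$ contains the set of $x$ for which the upper frequency of $+1$ among the $\sigma_j$ is at least $\tfrac{1+\eta}{2}$, and the Besicovitch--Eggleston computation (uniform measure on the odd digits, Bernoulli$\bigl(\tfrac{1+\eta}{2}\bigr)$ on the even ones) gives this set dimension $\tfrac12\bigl(1+\Phi(\eta)\bigr)$, which exceeds $\Phi(\eta)$ for every $\eta\in(0,1)$. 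Placing increments $\pm k$ at every $k$-th generation pushes the dimension up to $1-\frac{1-\Phi(\eta)}{k}$ while the hypothesis holds with $C=k$, so no bound of the claimed form, uniform in $C$, can be true. Your instinct that the inherited strip bound $\|\tilde S_n-\tilde S_m\|_\infty\le C+L|n-m|$ would rescue the sharp exponent is exactly the point at which both your argument and the paper's break down; any correct statement must use more than this hypothesis (for instance, a single-step increment bound close to $1$, as in Theorem \ref{t:th2} itself).
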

\begin{proof}
For positive integers $N$ and $k=0,1, \ldots, N-1$, consider the set
\begin{equation*}
   E_k = E_k (N, \eta) = \{x \in \mathbb{R} : \limsup_{n \to \infty} \frac{S_{Nn + k} (x)}{Nn} \geq \eta \} .  
\end{equation*}
Observe that 
\begin{equation*}
  E (\eta) = \cup_{k=0}^{N-1} E_k . 
\end{equation*}
Write $M=N+ C$ and consider the dyadic martingale $\{T_n \}$ given by 
\begin{equation*}
  T_n = T_n (N,k) = M^{-1} S_{Nn + k} , \quad n=1,2,\ldots
\end{equation*}
Since $\|T_{n+1} - T_n\|_\infty \leq 1$ and 
\begin{equation*}
   E_k =  \{x \in \mathbb{R} : \limsup_{n \to \infty} \frac{T_n (x)}{n} \geq N \eta / M \},
   \end{equation*}
Theorem \ref{t:th2} gives that $\dim E_k \leq \Phi (N \eta / M)$. Taking $N \to \infty$, we deduce that $\dim E(\eta) \leq \Phi (\eta)$ . 
\end{proof}

We now prove Corollary \ref{corl:continuous}. 

\begin{proof}[Proof of Corollary \ref{corl:continuous}]
Let $H(\gamma)$ be the set defined in the statement of Corollary \ref{corl1}. Writing $T_n - T_0$ as sum of $T_k - T_{k-1}$, we observe that $\sup_n 2^{-n \beta} \|T_n \|_\infty < \infty$. Then we deduce that $G(\gamma) \subset H(\gamma)$. So, part (a) of Corollary \ref{corl:continuous} follows from Corollary \ref{corl1}. Part (b) follows from (a). We now prove (c). Consider the dyadic martingale $\{S_n \}$ defined in \eqref{e:T} and observe that $\sup_n \|S_n - S_{n-1}\|_\infty < \infty $.  Given constants $\gamma >0$ and $\delta >0$ to be fixed later, pick $0< \eta < \delta \gamma (1-\gamma)^{-1}$. We will show that at almost every point $x \in \reals$ where  
\begin{equation}
   \label{hipotesi} 
\gamma =  \limsup_{N \to \infty} \frac{1}{N} \#  \{1 \leq k \leq N : 2^{- k \beta} T_k (x) > \delta  \} > 0 ,  
\end{equation}
we have that
\begin{equation}
    \label{conclusio}
     \limsup_{N \to \infty} \frac{1}{N} \#  \{1 \leq k \leq N : 2^{- k \beta} T_k (x) < - \eta  \} > 0 .  
\end{equation}
Fix $x \in \reals$. Consider the sets $A=\{1 \leq k \leq N : 2^{- k \beta} T_k (x) > \delta \}$, $B = \{1 \leq k \leq N : - \eta <  2^{- k \beta} T_k (x) \leq  \delta \}$ and $C = \{1 \leq k \leq N : 2^{- k \beta} T_k (x) \leq - \eta \}$. Assume that \eqref{hipotesi} is satisfied and \eqref{conclusio} does not hold. Using \eqref{parts} we obtain
$$
\limsup_{N \to \infty} \frac{S_N (x)}{N} \geq (1- 2^{- \beta}) (\delta \gamma - \eta (1-\gamma)) = \tau.$$
Hence the set of points $x \in \reals$ where \eqref{hipotesi} is satisfied and \eqref{conclusio} does not hold, is contained in the set $E(\tau)$. By Theorem \ref{t:th2} it has Hausdorff dimension smaller than $1$ and hence, Lebesgue measure zero. This finishes the proof. \end{proof}

\section{Adding $\alpha$-divided differences}
Given $f\in\Lambda_{\alpha} (\reals)$ and $0< \varepsilon < 1$, consider the accumulated $\alpha$-divided difference given by 
\begin{equation}
    \label{Theta}
    \Theta_\varepsilon (f) (x) = \int_{\varepsilon}^1  \frac{f(x+h) - f(x)}{ h^\alpha} \frac{dh}{h} , \quad x \in \reals .
\end{equation}
It is clear that $|\Theta_\varepsilon (f) (x) | \leq \|f \|_\alpha \log (1 / \varepsilon)$ for any $x \in \reals$. It was proved in \cite{CLlN} that $\Theta_\varepsilon (f) $ behaves as a dyadic martingale with bounded increments in the sense that there exists a constant $C = C(\alpha) >0$ and a dyadic martingale $\{S_n \}$ with $\|S_{n+1} - S_n \|_\infty \leq C \|f\|_\alpha$ such that 
\begin{equation}
    \label{acotacio}
   \sup_n  \sup_\varepsilon \{ \| \Theta_\varepsilon (f)  - S_n \|_\infty :  2^{-n-1} \leq \varepsilon \leq 2^{-n} \} < \infty . 
\end{equation}
 See also \cite{LlN}. 

\begin{proof}[Proof of Theorem \ref{thm:continuous}]
Let $\{S_n \}$ be the dyadic martingale satisfying \eqref{acotacio}. Since $\|f\|_\alpha \leq 1$, we have
\begin{equation*}
    |\Theta_\varepsilon (f) (x) - \Theta_\delta (f) (x)| \leq \left| \log \frac{\varepsilon}{\delta}\right| , \quad 0< \varepsilon, \delta < 1. 
\end{equation*}
By \eqref{acotacio}, the martingale $\{S_n\}$ satisfies the assumptions of Corollary \ref{noucor}.  
Observe that if $x \in G(\gamma)$ we have
$$
\limsup_{\varepsilon \to 0} \frac{\Theta_\varepsilon (f) (x)}{\log {\varepsilon}^{-1}} \geq \gamma
 $$
 Hence 
 $$G(\gamma) \subset \{x : \limsup_{n \to \infty} \frac{S_n (x)}{n} \geq \gamma \} . 
 $$
So, the estimate in (a) follows from Corollary \ref{noucor}. Part (b) follows directly from part (a). 
\end{proof}

The previous argument also shows the following result. 
\begin{corl}
\label{cor4}
Let $ 0<\alpha<1$ and  $f\in\Lambda_{\alpha} (\reals)$ with $\|f \|_\alpha \leq 1$.  For $0<\eta<1$ consider the set $E(\eta)$ of points $x \in \reals$ such that
\[
 \limsup_{ \varepsilon \rightarrow0^+}\frac{\Theta_\varepsilon (f) (x)}{\log (1/\varepsilon)}>\eta.
\]
Then $\dim E(\eta) \leq \Phi (  \eta)$.  
\end{corl}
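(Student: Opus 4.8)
The plan is to mimic almost verbatim the proof of part (a) of Theorem \ref{thm:continuous}, the only difference being that the defining condition of $E(\eta)$ here is stated with a $\limsup$ and a strict inequality rather than with the $\sigma$-proportion of scales appearing in $G(\gamma)$. First I would invoke \cite{CLlN} to fix a dyadic martingale $\{S_n\}$ and a constant $C=C(\alpha)>0$ with $\|S_{n+1}-S_n\|_\infty\le C\|f\|_\alpha\le C$ satisfying \eqref{acotacio}. As in the proof of Theorem \ref{thm:continuous}, the bound $\|f\|_\alpha\le 1$ gives $|\Theta_\varepsilon(f)(x)-\Theta_\delta(f)(x)|\le|\log(\varepsilon/\delta)|$, so together with \eqref{acotacio} the martingale $\{S_n\}$ satisfies the hypothesis of Corollary \ref{noucor}, namely $\|S_n-S_m\|_\infty\le C'+|n-m|$ for a suitable constant $C'$.

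Next I would translate membership in $E(\eta)$ into a growth condition on $\{S_n\}$. If $x\in E(\eta)$, then
\[
\limsup_{\varepsilon\to 0^+}\frac{\Theta_\varepsilon(f)(x)}{\log(1/\varepsilon)}>\eta,
\]
and since by \eqref{acotacio} we have $|\Theta_\varepsilon(f)(x)-S_n(x)|=O(1)$ whenever $2^{-n-1}\le\varepsilon\le 2^{-n}$, while $\log(1/\varepsilon)=n\log 2+O(1)$, it follows that
\[
\limsup_{n\to\infty}\frac{S_n(x)}{n}\ge\eta\log 2.
\]
Hence $E(\eta)$ is contained in the set $\{x:\limsup_n S_n(x)/n\ge\eta\log 2\}$ considered in Corollary \ref{noucor}, which therefore has dimension at most $\Phi(\eta\log 2)$.

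At this point there is a small discrepancy: the normalization in the statement is $\Phi(\eta)$, not $\Phi(\eta\log 2)$. This is resolved exactly as in the transition from Theorem \ref{t:th2} to Theorem \ref{thm:continuous}: one should measure scales dyadically so that a unit step in $n$ corresponds to halving $\varepsilon$, i.e. replace the normalization $\log(1/\varepsilon)$ by $\log_2(1/\varepsilon)$ throughout, or equivalently note that the martingale naturally attached to $\Theta_\varepsilon(f)$ via \eqref{acotacio} has increments controlled in the $\log_2$ scale; with that bookkeeping the relevant growth condition becomes $\limsup_n S_n(x)/n\ge\eta$ and Corollary \ref{noucor} yields $\dim E(\eta)\le\Phi(\eta)$ directly. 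The only genuinely delicate point — and it is the same one already handled in the proof of Theorem \ref{thm:continuous} — is ensuring that the additive $O(1)$ errors between $\Theta_\varepsilon(f)$ and $S_n$ in \eqref{acotacio} are absorbed harmlessly; they are, precisely because Corollary \ref{noucor} allows an arbitrary additive constant $C$ in the comparison $\|S_n-S_m\|_\infty\le C+|n-m|$ and because the $\limsup$ is insensitive to bounded perturbations of $S_n(x)$. No new ideas beyond those in the preceding proofs are required.
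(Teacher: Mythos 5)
Your proposal is correct and follows essentially the same route as the paper, which gives no separate proof of Corollary \ref{cor4} beyond the remark that ``the previous argument'' (the proof of Theorem \ref{thm:continuous}(a)) applies: compare $\Theta_\varepsilon(f)$ with the martingale of \eqref{acotacio} and feed the result into Corollary \ref{noucor}. On the $\log 2$ issue, the clean way to phrase the fix is not to alter the normalization in the statement but, as in your ``equivalently'' clause, to apply Corollary \ref{noucor} to the rescaled martingale $S_n/\log 2$, whose differences satisfy $\|S_n/\log 2-S_m/\log 2\|_\infty\le C'+|n-m|$ because $|\Theta_{2^{-n}}(f)(x)-\Theta_{2^{-m}}(f)(x)|\le|n-m|\log 2$, and for which $x\in E(\eta)$ yields $\limsup_n S_n(x)/(n\log 2)\ge\eta$; this is in fact more careful than the paper, which silently identifies $\limsup_{\varepsilon\to 0^+}\Theta_\varepsilon(f)(x)/\log(1/\varepsilon)$ with $\limsup_n S_n(x)/n$.
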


When $\Theta_\varepsilon (f) (x)$ diverges at almost every point, one can find any kind of behavior on sets of Hausdorff dimension $1$. 

\begin{thm}\label{t:th4}
Let $ 0<\alpha<1$ and  $f\in\Lambda_{\alpha} (\reals)$. Let $I \subset \reals$ be an interval such that 
\[
\limsup_{\varepsilon \rightarrow0^+} \Theta_\varepsilon (f) (x) = + \infty
\]
for almost every $x\in I$. Then for any sequence $\{M_k\}_{k=1}^{\infty}$ of real numbers there exists a set $E\subset I $ with $\dim E=1$ such that for any $x\in E$ there exists a decreasing sequence $\{\varepsilon_k\}_{k=1}^{\infty}$ of positive numbers tending to $0$, such that
\[
\int_{\varepsilon_{k}}^{1}\frac{f(x+h)-f(x)}{h^{\alpha}}\,\frac{dh}{h} = M_k,\quad k = 1, 2, \ldots.
\]
\end{thm}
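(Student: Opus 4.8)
The plan is to transfer the statement to one about a dyadic martingale with bounded increments, apply the classical convergence/oscillation dichotomy for such martingales, and then combine the continuity of $\varepsilon\mapsto\Theta_\varepsilon(f)(x)$ with the intermediate value theorem. After a harmless reduction we may assume that $I$ is a bounded dyadic interval (a general interval is a countable union of such, and a countable union of sets of dyadic Hausdorff dimension $1$ has dimension $1$). By \eqref{acotacio} there are a dyadic martingale $\{S_n\}$ with $\|S_{n+1}-S_n\|_\infty\le C$ and a constant $C'$ such that $\|\Theta_\varepsilon(f)-S_n\|_\infty\le C'$ whenever $2^{-n-1}\le\varepsilon\le 2^{-n}$; hence the hypothesis forces $\limsup_n S_n(x)=+\infty$ for a.e. $x\in I$. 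I would then invoke the dichotomy: for a dyadic martingale with bounded increments, outside a Lebesgue-null set every point is either a point of convergence of $\{S_n\}$ or satisfies simultaneously $\limsup_n S_n=+\infty$ and $\liminf_n S_n=-\infty$. (This follows from standard martingale theory: stopping $\{S_n\}$ at the first time it reaches a level $M$ yields a martingale bounded above by $M+C$, hence $L^1$-bounded on the bounded dyadic interval $I$ and therefore convergent a.e. on $\{\sup_n S_n<M\}$; letting $M\to\infty$, and arguing symmetrically with $\inf_n S_n$, gives the claim.) Therefore $\liminf_n S_n(x)=-\infty$ a.e. on $I$, and, using \eqref{acotacio} again along $\varepsilon=2^{-n}$,
\begin{equation*}
\limsup_{\varepsilon\to0^+}\Theta_\varepsilon(f)(x)=+\infty \quad\text{and}\quad \liminf_{\varepsilon\to0^+}\Theta_\varepsilon(f)(x)=-\infty\qquad\text{for a.e. }x\in I.
\end{equation*}
Let $E\subset I$ be the set of such $x$; since $(x,\varepsilon)\mapsto\Theta_\varepsilon(f)(x)$ is jointly continuous on $\reals\times(0,1]$, $E$ is Borel, and it has full Lebesgue measure in $I$.

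Next I would use that $\varepsilon\mapsto\Theta_\varepsilon(f)(x)$ is continuous, indeed $|\Theta_\varepsilon(f)(x)-\Theta_\delta(f)(x)|\le\|f\|_\alpha|\log(\varepsilon/\delta)|$, with $\Theta_1(f)(x)=0$. Fix $x\in E$. Given $M\in\reals$ and $\varepsilon_0\in(0,1)$, choose $\varepsilon''<\varepsilon_0$ with $\Theta_{\varepsilon''}(f)(x)>M$ (possible since $\limsup_{\varepsilon\to0^+}\Theta_\varepsilon(f)(x)=+\infty$) and then $\varepsilon'<\varepsilon''$ with $\Theta_{\varepsilon'}(f)(x)<M$ (possible since $\liminf_{\varepsilon\to0^+}\Theta_\varepsilon(f)(x)=-\infty$); by the intermediate value theorem on $[\varepsilon',\varepsilon'']$ there is $\varepsilon\in(\varepsilon',\varepsilon'')\subset(0,\varepsilon_0)$ with $\Theta_\varepsilon(f)(x)=M$. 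Thus for every $x\in E$ the function $\varepsilon\mapsto\Theta_\varepsilon(f)(x)$ attains every real value at arbitrarily small scales.

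Finally, given $\{M_k\}$, for each $x\in E$ I would select the scales greedily: put $\varepsilon_0=1$ and, having chosen $\varepsilon_{k-1}$, pick $\varepsilon_k<\min\{\varepsilon_{k-1},1/k\}$ with $\Theta_{\varepsilon_k}(f)(x)=M_k$. Then $\{\varepsilon_k\}$ decreases to $0$ and $\int_{\varepsilon_k}^1\frac{f(x+h)-f(x)}{h^\alpha}\,\frac{dh}{h}=M_k$ for all $k$ and all $x\in E$. Since $E$ has full Lebesgue measure in the nondegenerate interval $I$, $\dim E=1$, and the proof is complete. The only non-routine ingredient is the martingale dichotomy used to upgrade $\limsup_{\varepsilon\to0^+}\Theta_\varepsilon(f)=+\infty$ to the simultaneous statement $\liminf_{\varepsilon\to0^+}\Theta_\varepsilon(f)=-\infty$ almost everywhere; granted that, the remainder is continuity plus a diagonal-type selection, so the write-up should be short.
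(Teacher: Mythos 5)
Your proposal is correct and follows essentially the same route as the paper: pass to the dyadic martingale $\{S_n\}$ of \eqref{acotacio}, use divergence plus bounded increments to conclude that $\limsup_n S_n=+\infty$ and $\liminf_n S_n=-\infty$ on a full-measure (hence dimension-$1$) subset of $I$, and then use the continuity of $\varepsilon\mapsto\Theta_\varepsilon(f)(x)$ together with the intermediate value theorem to hit each $M_k$ at decreasing scales. The only difference is that you spell out the stopping-time proof of the convergence/oscillation dichotomy, which the paper leaves implicit in the assertion that such a set $E$ of dimension $1$ exists.
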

\begin{proof}
The assumption gives that the martingale $\{S_n\}$ satisfying \eqref{acotacio} diverges at almost every point of $I$ as well. Pick a constant $C > \sup_n \sup \{ \| \Theta_\varepsilon (f)  - S_n \|_\infty : 2^{-n-1} \leq  \varepsilon \leq 2^{-n} \} $. One can find a set $E\subset I$ with $\dim E=1$ such that for any $x\in E$ there exist increasing sequences ${n}_k = {n}_k(x)$ and ${m}_k = m_k(x)$ of integers, $n_k < m_k < n_{k+1}$, with $S_{n_k} (x) < M_k-C$ and $S_{m_k} (x) > M_k+C$, $k \geq 1$. Thus
\[
\int_{2^{-n_k}}^{1}\frac{f(x+h)-f(x)}{h^{\alpha}} \,\frac{dh}{h} < M_k,
\]
and 
\[
\int_{2^{- m_k}}^{1} \frac{f(x+h)-f(x)}{h^{\alpha}} \,\frac{dh}{h} > M_k.
\]
Hence for any $k \geq 1$, there exists $\varepsilon_k$ with $2^{-m_k} < \varepsilon_k <   2^{-n_k}$, such that
\[
\int_{\varepsilon_k}^{1} \frac{f(x+h)-f(x)}{h^{\alpha}}\,\frac{dh}{h} = M_k.
\]
\end{proof}

Makarov proved that a Bloch martingale $\{S_n \}$ that diverges almost everywhere must satisfy $$
\Lambda_\varphi \{x \in \reals: \lim_{n \to \infty} S_n (x) = + \infty\} >0, 
$$
where $\Lambda_\varphi $ is the Hausdorff measure associated to the function $\varphi (t) = t \sqrt{\log t^{-1} \log \log \log t^{-1} }$. See \cite{Makarov}. So, we may complement Theorem \ref{t:th4} in the following way. 

\begin{thm}\label{MakarovHolder}
Let $0< \alpha < 1$ and $f \in \Lambda_\alpha (\reals)$ such that 
\begin{equation}
    \label{extremeholder}
    \limsup_{\varepsilon \rightarrow0^+} \Theta_\varepsilon (f) (x) = + \infty  
\end{equation}
for almost every $x \in \reals$. Then the set 
\begin{equation}
    \label{definicioset}
J = \{ x \in \reals :    \lim_{\varepsilon \to 0^+} \int_{\varepsilon}^1  \frac{f(x+h) - f(x)}{ h^\alpha} \frac{dh}{h} = + \infty  \}  
\end{equation}
has Lebesgue measure zero but $\Lambda_\varphi (J) >0$
\end{thm}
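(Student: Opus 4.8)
The plan is to push everything over to the dyadic martingale $\{S_n\}$ provided by \eqref{acotacio} and then combine the elementary convergence theory of dyadic martingales with bounded increments with Makarov's lower bound quoted above. First I would fix the martingale $\{S_n\}$, with $\|S_{n+1}-S_n\|_\infty\le C\|f\|_\alpha$, satisfying \eqref{acotacio}, and let $C_0$ be the finite constant $\sup_n\sup\{\|\Theta_\varepsilon (f)-S_n\|_\infty:2^{-n-1}\le\varepsilon\le 2^{-n}\}$. Given $x\in\reals$ and $\varepsilon\in(0,1)$, pick $n$ with $2^{-n-1}\le\varepsilon\le 2^{-n}$; then $|\Theta_\varepsilon (f)(x)-S_n(x)|\le C_0$, and letting $\varepsilon\to 0^+$ one checks that $\limsup_{\varepsilon\to 0^+}\Theta_\varepsilon (f)(x)=+\infty$ if and only if $\limsup_n S_n(x)=+\infty$, and similarly $\lim_{\varepsilon\to 0^+}\Theta_\varepsilon (f)(x)=+\infty$ if and only if $\lim_n S_n(x)=+\infty$. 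Thus the hypothesis \eqref{extremeholder} says that $\limsup_n S_n(x)=+\infty$ for a.e.\ $x$, so $\{S_n\}$ diverges at a.e.\ point of $\reals$, and the set $J$ of \eqref{definicioset} is precisely $\{x\in\reals:\lim_n S_n(x)=+\infty\}$.

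Next I would dispose of the Lebesgue null assertion using the standard dichotomy for a dyadic martingale with bounded increments: at a.e.\ point either $\{S_n(x)\}$ converges to a finite limit or $\limsup_n S_n(x)=+\infty$ and $\liminf_n S_n(x)=-\infty$. To see this, fix $M\in\mathbb{N}$ and stop $\{S_n\}$ at $\tau=\inf\{n:S_n>M\}$; the stopped martingale has increments bounded by $C\|f\|_\alpha$ and satisfies $S_{n\wedge\tau}\le M+C\|f\|_\alpha$, so $M+C\|f\|_\alpha-S_{n\wedge\tau}$ is a nonnegative martingale and $\{S_{n\wedge\tau}(x)\}$ converges for a.e.\ $x$. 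Since $\{S_{n\wedge\tau}\}$ agrees with $\{S_n\}$ on $\{\sup_n S_n\le M\}$, taking the union over $M$ shows that $\{S_n\}$ converges a.e.\ on $\{\sup_n S_n<\infty\}$, and symmetrically on $\{\inf_n S_n>-\infty\}$, which gives the dichotomy. Because $\limsup_n S_n=+\infty$ a.e.\ by the first paragraph, the convergent alternative is ruled out a.e., hence $\lim_n S_n(x)=+\infty$ can only hold on a set of measure zero; that is, $|J|=0$.

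Finally, $\{S_n\}$ is a dyadic martingale with bounded increments, i.e.\ a Bloch martingale, and it diverges at a.e.\ point of $\reals$. Makarov's theorem recalled above then yields $\Lambda_\varphi\{x:\lim_n S_n(x)=+\infty\}>0$ with $\varphi(t)=t\sqrt{\log t^{-1}\log\log\log t^{-1}}$, and since this set equals $J$ we conclude $\Lambda_\varphi(J)>0$. (Should one prefer Makarov's statement on a fixed dyadic unit interval, apply it on an interval on which $\{S_n\}$ diverges a.e., and use that the dyadic and Euclidean Hausdorff measures for the doubling gauge $\varphi$ are comparable up to a multiplicative constant.)

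There is no genuinely delicate estimate in this argument: it is the dictionary supplied by \eqref{acotacio} together with the nonnegative martingale convergence theorem and Makarov's result. The step most worth writing out carefully is the uniform-in-$x$ comparison of the first paragraph, which is what makes $J$ coincide \emph{as a set} with $\{x:\lim_n S_n(x)=+\infty\}$, rather than merely up to Hausdorff dimension as in the earlier sections; once that is in place the two cited facts finish the proof immediately.
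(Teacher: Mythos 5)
Your proof is correct and follows the same route as the paper: identify $J$ with $\{x:\lim_n S_n(x)=+\infty\}$ via \eqref{acotacio}, deduce $|J|=0$ from the convergence/oscillation dichotomy for martingales with bounded increments, and invoke Makarov's theorem for the $\Lambda_\varphi$ lower bound. The only difference is that the paper states the measure-zero step and the a.e.\ divergence of $\{S_n\}$ without proof, whereas you supply the standard stopping-time argument; this is a welcome expansion, not a deviation.
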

\begin{proof}
Let $\{S_n \}$ be the dyadic martingale satisfying \eqref{acotacio}. We have 
$$
J= \{x \in \reals : \lim_{n \to \infty} S_n (x) = + \infty \}
$$
which has Lebesgue measure zero. By assumption for almost every $x \in \reals$,  $\Theta_\varepsilon (f) (x) $ has no limit when $\varepsilon \to 0$. Hence the martingale $\{S_n \}$ diverges almost everywhere and by Makarov's result $\Lambda_\varphi (J) >0$. 
\end{proof}

Let us now deduce Theorem \ref{mean} from Theorem \ref{t:th2}. We start with an auxiliary result. 

\begin{lemma}
\label{Means}
Let $0< \alpha < 1$ and let $f \in \Lambda_\alpha (\reals)$ with $\|f \|_\alpha = 1$. For $0< \delta < 1$ and $0< \gamma < 1$, pick constants $0< \eta < \delta \gamma (1- \gamma)^{-1}$ and $0< \gamma^* < \beta:= \delta \gamma - \eta (1- \gamma)$. Fix $s > \Phi (\beta - \gamma^*)$. Then for almost every ($H^s$) $x \in \reals$ such that
\begin{equation}
    \label{superior}
   \limsup_{\varepsilon \to 0^+} \frac{\sigma\{t \in [\varepsilon,1]: f(x+t) - f(x) > \delta t^\alpha \}}{\log {\varepsilon}^{-1}} > \gamma ,  
\end{equation}
we have 
\begin{equation}
    \label{inferior}
   \limsup_{\varepsilon \to 0^+} \frac{\sigma\{t \in [\varepsilon,1]: f(x+t) - f(x) < - \eta t^\alpha \}}{\log {\varepsilon}^{-1}} > \gamma^* .   
\end{equation}
\end{lemma}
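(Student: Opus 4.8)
\noindent\textbf{Proof strategy for Lemma \ref{Means}.}
The plan is to imitate the proof of part (c) of Corollary \ref{corl:continuous}, replacing the counting of dyadic scales by the logarithmic measure $\sigma$ of scales $t \in [\varepsilon,1]$ and the dyadic martingale there by the accumulated divided difference $\Theta_\varepsilon(f)$ of \eqref{Theta}. The extra ingredient we exploit is that, since $\|f\|_\alpha = 1$, the ratio $g(x,t) := (f(x+t)-f(x))/t^{\alpha}$ satisfies $|g(x,t)| \leq 1$ for every $x \in \reals$ and every $t \in (0,1]$; in particular the set of scales where $g(x,\cdot)$ is very negative cannot be too large in $\sigma$-measure, and this is what lets us estimate $\Theta_\varepsilon(f)(x) = \int_\varepsilon^1 g(x,t)\, dt/t$ from below.

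First I would fix $x \in \reals$ and, for each $0 < \varepsilon < 1$, decompose $[\varepsilon,1]$ into
\[
A_\varepsilon = \{ t \in [\varepsilon,1] : g(x,t) > \delta \},\quad B_\varepsilon = \{ t \in [\varepsilon,1] : -\eta \leq g(x,t) \leq \delta \},\quad C_\varepsilon = \{ t \in [\varepsilon,1] : g(x,t) < -\eta \},
\]
so that $\sigma(A_\varepsilon) + \sigma(B_\varepsilon) + \sigma(C_\varepsilon) = \log\varepsilon^{-1} =: L$. Bounding $\int_{A_\varepsilon} g(x,t)\, dt/t \geq \delta\,\sigma(A_\varepsilon)$, $\int_{B_\varepsilon} g(x,t)\, dt/t \geq -\eta\,\sigma(B_\varepsilon)$ and $\int_{C_\varepsilon} g(x,t)\, dt/t \geq -\sigma(C_\varepsilon)$ (the last using $g \geq -1$), adding them, and substituting $\sigma(B_\varepsilon) = L - \sigma(A_\varepsilon) - \sigma(C_\varepsilon)$, one obtains
\[
\frac{\Theta_\varepsilon(f)(x)}{L} \;\geq\; (\delta+\eta)\,\frac{\sigma(A_\varepsilon)}{L} \;-\; \eta \;-\; (1-\eta)\,\frac{\sigma(C_\varepsilon)}{L}.
\]

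Next, suppose $x$ satisfies \eqref{superior} but not \eqref{inferior}. From \eqref{superior} I would extract a sequence $\varepsilon_j \downarrow 0$ along which $\sigma(A_{\varepsilon_j})/\log\varepsilon_j^{-1}$ converges to some $a > \gamma$, while the failure of \eqref{inferior} gives $\limsup_j \sigma(C_{\varepsilon_j})/\log\varepsilon_j^{-1} \leq \gamma^*$. Passing the displayed inequality to the lower limit along $\varepsilon_j$,
\[
\liminf_{j \to \infty} \frac{\Theta_{\varepsilon_j}(f)(x)}{\log\varepsilon_j^{-1}} \;\geq\; (\delta+\eta)a - \eta - (1-\eta)\gamma^* \;>\; (\delta+\eta)\gamma - \eta - \gamma^* \;=\; \beta - \gamma^*,
\]
the strict inequality using $a > \gamma$, $\eta > 0$ and $\gamma^* > 0$. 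Hence $\limsup_{\varepsilon\to 0^+}\Theta_\varepsilon(f)(x)/\log\varepsilon^{-1} > \beta - \gamma^*$, so every such point $x$ lies in the set $E(\beta - \gamma^*)$ of Corollary \ref{cor4}. The hypotheses $0 < \eta < \delta\gamma(1-\gamma)^{-1}$ and $0 < \gamma^* < \beta$ ensure $0 < \beta < 1$ and $0 < \beta - \gamma^* < 1$, so Corollary \ref{cor4} applies and gives $\dim E(\beta - \gamma^*) \leq \Phi(\beta - \gamma^*)$; since $s > \Phi(\beta - \gamma^*)$ this forces $H^s(E(\beta - \gamma^*)) = 0$. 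Therefore the set of $x$ satisfying \eqref{superior} but not \eqref{inferior} has zero $H^s$-measure, which is the assertion.

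The only delicate points are bookkeeping ones: extracting from the $\limsup$ in \eqref{superior} a subsequence of scales along which $\sigma(A_{\varepsilon_j})/\log\varepsilon_j^{-1}$ genuinely converges, combining $\limsup$ and $\liminf$ correctly when sending the two scale estimates to the limit, and preserving the strict inequalities (where $\eta>0$, $\gamma^*>0$ and $a>\gamma$ are used). I do not expect a real obstacle: the substantive inputs are the uniform pointwise bound $|g(x,t)| \leq 1$ coming from $\|f\|_\alpha = 1$, and Corollary \ref{cor4}, which itself rests on Theorem \ref{t:th2}.
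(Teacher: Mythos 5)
Your proposal is correct and follows essentially the same route as the paper: the same three-way decomposition of scales into $A_\varepsilon$, $B_\varepsilon$, $C_\varepsilon$, the same lower bound on $\Theta_\varepsilon(f)(x)$ using $|f(x+t)-f(x)|\leq t^\alpha$, and the same conclusion via Corollary \ref{cor4} that the exceptional set lies in $E(\beta-\gamma^*)$ and hence has zero $H^s$-measure for $s>\Phi(\beta-\gamma^*)$. The only difference is that you spell out the subsequence and $\liminf$/$\limsup$ bookkeeping that the paper leaves implicit.
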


\begin{proof}
Fixed $x \in \reals$ and $\varepsilon >0$, consider the sets $A = \{ t \in [\varepsilon,1]: f(x+t) - f(x) > \delta t^\alpha \}$, $
    B=  \{ t \in [\varepsilon,1]: - \eta t^\alpha <  f(x+t) - f(x) \leq \delta t^\alpha \}$ and 
$ C=  \{ t \in [\varepsilon,1]: f(x+t) - f(x) \leq - \eta t^\alpha \}$. 
Observe that $\sigma(B) \leq \log {\varepsilon}^{-1} - \sigma (A)$. Hence if \eqref{superior} 
is satisfied and \eqref{inferior} does not hold, we have
$$
\limsup_{\varepsilon \to 0} \frac{\Theta_{\varepsilon} (f) (x) }{\log {\varepsilon}^{-1}} > \delta \gamma - \eta (1- \gamma) - \gamma^* = \beta - \gamma^* . 
$$
Then the result follows from Corollary \ref{cor4}. 
\end{proof}

\begin{proof}[Proof of Theorem \ref{mean}]
Fix constants $0< \delta < 1 $ and $0< \gamma <1$ and pick $\eta$, $\beta$ and ${\gamma}^*$ as in Lemma \ref{Means}. Consider the sets
$$
A = \{ x \in \reals : \limsup_{h \to 0}  \frac{ \sigma \{t \in [h, 1]: f(x+t) - f(x) > \delta t^\alpha\} } {\log(1/h)} > \gamma \}
$$
and
$$
B = \{ x \in \reals : \limsup_{h \to 0}  \frac{\sigma \{ t \in [h, 1]: f(x+t) - f(x) < - \eta t^\alpha \} }{\log(1/h)} > \gamma^*\} . 
$$
By Lemma \ref{Means}, the set $A \setminus B$ has Lebesgue measure zero. This finishes the proof.
\end{proof}

 For $0< \alpha < 1$ let $\Lambda_\alpha^*$ be the class of functions $f \in \Lambda_\alpha (\reals)$ for which there exists a constant $C=C(f)>0$ such that for any $x,h \in \reals$ there exists $h^* = h^* (x,h)$ with $h/C \leq h^* \leq Ch$ satisfying
\begin{equation}
    \label{Lambda^*}
    |f(x+ h^*) - f(x)| > C^{-1} |h^*|^\alpha . 
\end{equation}
This condition has appeared in \cite{Baranski} and \cite{BishopPeres} in relation to the problem of computing the Hausdorff dimension of the graph of $f$. If $b>1$, Weierstrass lacunary series $f_{b, \alpha} $ defined in \eqref{eq:Weierstrass} is in $\Lambda_\alpha^*$. See Theorem 2.4 of \cite{Baranski}. It is worth mentioning that if $f \in \Lambda_\alpha^*$ then condition \eqref{superior} or \eqref{inferior} are satisfied at almost every point $x \in \reals$ for certain constants $\delta,  \gamma, \eta, \gamma^*$. Condition \eqref{superior} should also be compared with the notion of mean porosity. See the survey of \cite{Shmerkin}. 

\section{Proof of Theorem \ref{t:contraexemple}}
The proof consists of two parts. First we show the dyadic martingale version of Theorem \ref{t:contraexemple}. Then we approximate the $\alpha$-divided differences by their discrete versions arriving at the continuous statement.
\begin{lemma}\label{l:contraexemple}
Let $0<\beta<1$. Then there exists a dyadic martingale $\{S_n\}$ with $\sup_{n}2^{-n\beta}\|S_n\|_{\infty}<\infty$, 
 such that
\[
\limsup_{n\rightarrow\infty}2^{-n\beta}S_n(x)>0,
\]
and
\[
\liminf_{n\rightarrow\infty}2^{-n\beta}S_n(x) \geq 0
\]
for almost every $x\in \mathbb{R}$. Actually the following uniform version of the last inequality holds: for any $\delta>0$ there exists $n_0\in\mathbb{N}$ such that 
\[
2^{-n\beta}S_n(x) \geq -\delta \;\;\; \textup{for any}\;\; n\geq n_0,\; \; x\in\mathbb{R}.
\]
\end{lemma}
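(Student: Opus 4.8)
The plan is to build $\{S_n\}$ by hand, interleaving long ``freeze'' stretches, on which $S$ is constant, with short ``spike'' stretches, on which $S$ is pushed up on a thin set of points. A word first on why nothing simpler works: if $S_n\ge 0$ for all $n$, then $\|S_{n+1}-S_n\|_\infty\le 2^{n\beta}$ forces $\|S_n\|_\infty\le\|S_0\|_\infty+C\,2^{n\beta}$ and $\int_{[0,1)}S_n=\int_{[0,1)}S_0$, so Markov's inequality gives $|\{x\in[0,1):S_n(x)>\varepsilon 2^{n\beta}\}|\lesssim \varepsilon^{-1}2^{-n\beta}$, which is summable in $n$; by Borel--Cantelli $\limsup_n 2^{-n\beta}S_n=0$ a.e. Hence the martingale we seek must take (small) negative values, and by the same computation these dips must have a non-summable ``profile''. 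Accordingly, fix $d_k:=\lceil\log_2 k\rceil$ (so $d_k\to\infty$ but $\sum_k 2^{-d_k}=\infty$), put $\varepsilon_k:=2^{-d_k(1-\beta)}$ (so $\varepsilon_k\downarrow 0$ while $\sum_k\varepsilon_k=\infty$), and fix a constant $\lambda=\lambda(\beta)$ to be chosen large. Group the generations as: a freeze block of length $\lambda$, a spike block of length $d_1$, a freeze block of length $\lambda$, a spike block of length $d_2$, and so on; on freeze blocks $S$ is simply extended as a constant martingale.

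\textbf{The spike blocks.} The $k$-th spike block occupies generations $M_k,\dots,M_k+d_k$ and is built separately inside each $I\in\mathcal D_{M_k}$ from the single incoming value $v:=S_{M_k}|_I$. Let $I=J_0\supset J_1\supset\cdots\supset J_{d_k}$ be the chain of left children, $J_j^{\ast}$ the sibling of $J_j$. Prescribe an increasing sequence $V_0=v\le V_1\le\cdots\le V_{d_k}$ subject to
\[
V_j-V_{j-1}\le 2^{(M_k+j)\beta}\qquad\text{and}\qquad 2V_{j-1}-V_j\ge -\varepsilon_k\,2^{(M_k+j)\beta}\qquad(1\le j\le d_k),
\]
and set $S\equiv V_{d_k}$ on $J_{d_k}$ and $S\equiv 2V_{j-1}-V_j$ on the whole subtree rooted at $J_j^{\ast}$. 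A one-line telescoping check shows that, whatever the $V_j$, at every generation the two children of an interval average to their parent (and the mean of $S_{M_k+d_k}$ over $I$ equals $v$), so $\{S_n\}$ is a dyadic martingale; the first inequality yields $\|S_{n+1}-S_n\|_\infty\le 2^{n\beta}$, and hence $\|S_n\|_\infty\lesssim 2^{n\beta}$ once $|v|\lesssim 2^{M_k\beta}$, while the second yields $S_n\ge -\varepsilon_k 2^{n\beta}$ throughout the block, the only negative values present being the frozen $2V_{j-1}-V_j$. Growing the $V_j$ as fast as the two inequalities permit, the normalised quantities $W_j:=V_j 2^{-(M_k+j)\beta}$ obey $W_j=2^{1-\beta}W_{j-1}+\varepsilon_k$ until $W_j$ saturates near $2^{\beta}$; since $2^{(1-\beta)d_k}=\varepsilon_k^{-1}$, this geometric recursion sums to $W_{d_k}\ge \varepsilon_k^{-1}W_0+(1-\varepsilon_k)(2^{1-\beta}-1)^{-1}$. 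A freeze block of length $\lambda$ divides the relative debt $-\inf_x S_n(x)/2^{n\beta}$ by $2^{\lambda\beta}$, so taking $\lambda=\lambda(\beta)$ large makes $\varepsilon_k^{-1}|W_0|$ as small as we wish at the start of every spike block; thus $W_{d_k}\ge\rho:=\rho(\beta)>0$ for all large $k$, i.e. $S_{M_k+d_k}\ge\rho\,2^{(M_k+d_k)\beta}$ on $J_{d_k}$.

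\textbf{Reading off the conclusions, and the main obstacle.} We have shown $S_n\ge -\varepsilon_{k(n)}2^{n\beta}$ with $\varepsilon_{k(n)}\to 0$ as $n\to\infty$ (freeze blocks only improve this), which is exactly the stated uniform version, and it gives $\liminf_n 2^{-n\beta}S_n\ge 0$ everywhere. For the $\limsup$: inside each $I\in\mathcal D_{M_k}$ the interval $J_{d_k}$ — equivalently, the set $\{x:x_{M_k+1}=\cdots=x_{M_k+d_k}=0\}$ of points whose binary digits in those positions all vanish — lies in $\{x:2^{-(M_k+d_k)\beta}S_{M_k+d_k}(x)\ge\rho\}$. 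For different $k$ these sets are defined by disjoint blocks of binary digits, hence are independent under Lebesgue measure on $[0,1)$; their probabilities are $2^{-d_k}$ with $\sum_k 2^{-d_k}=\infty$, so by the second Borel--Cantelli lemma almost every $x$ lies in infinitely many of them, whence $\limsup_n 2^{-n\beta}S_n(x)\ge\rho>0$ a.e. (and $\sup_n 2^{-n\beta}\|S_n\|_\infty<\infty$ was recorded along the way). The one genuine difficulty is the tension isolated at the outset: the martingale is forced below $0$ (no nonnegative example has the required $\limsup$), yet the dips must be $o(2^{n\beta})$ while the spikes remain of order $2^{n\beta}$ at a.e. point; the freeze blocks and the calibration $\varepsilon_k=2^{-d_k(1-\beta)}$ exist precisely to reconcile these — the former lets the unavoidable debt of a spike decay to a negligible fraction of the current scale before the next spike, the latter lets an $\varepsilon_k$-sized kick amplify over $d_k$ geometric steps of gain $2^{1-\beta}$ back to order one.
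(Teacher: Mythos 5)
Your construction is correct, and its core --- a spike supported on the leftmost chain of dyadic descendants, a small uniformly negative value on the complement, and long constant stretches that let the relative debt decay to a negligible fraction of the current scale before the next spike --- is exactly the building block $W(\delta,J)$ and the block arrangement of the paper's proof. Where you genuinely diverge is in the endgame for the almost-everywhere $\limsup$: the paper fixes the spike depth $M(\delta_j)$ at stage $j$ and repeats the block $n_j$ times, with $n_j$ chosen so that the probability $(1-2^{-M(\delta_j)})^{n_j}\lesssim\delta_j$ of missing every spike is summable, and then applies the convergence half of Borel--Cantelli to the complements; you instead use a single block per stage with depths $d_k\sim\log_2 k$ growing so slowly that the hitting probabilities $2^{-d_k}\sim 1/k$ are non-summable, and invoke the divergence half of Borel--Cantelli together with independence of disjoint digit blocks. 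Your calibration $\varepsilon_k=2^{-d_k(1-\beta)}$ ties the dip size to the depth in one stroke and dispenses with the paper's double induction over $(j,n)$, and your opening observation that no nonnegative martingale can work (Markov plus Borel--Cantelli would force $\limsup_n 2^{-n\beta}S_n=0$ a.e.) is a genuine motivation absent from the paper. Two small points deserve a line in a final write-up: the degenerate index $d_1=0$ (start at $k=2$, or take $d_k=\lceil\log_2(k+1)\rceil$), and the check that the greedy recursion $W_j=\min\left(2^{1-\beta}W_{j-1}+\varepsilon_k,\;2^{-\beta}W_{j-1}+1\right)$ remains above $\rho$ once it saturates; both are routine.
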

\begin{proof}
It is enough to define $\{S_n\}$ on the unit interval $[0,1)$. It will be constructed via a double induction argument. More precisely, we define a pair of increasing sequences $\{k_{jn}\}_{1\leq n\leq n_j}$ and $M_j,\; n\in\mathbb{Z}_+$ of natural numbers satisfying
\begin{equation}\notag
\begin{split}
&k_{00}+ M_0 \leq k_{01} \leq k_{01} + M_0 \leq \dots\leq k_{0n_0} + M_0 \leq k_{10} \leq k_{10} + M_1\dots\leq k_{1n_1}+M_1\leq\dots\\
&\leq k_{20}\leq\dots \leq k_{(j-1)n_{j-1}}+ M_{j-1}\leq
k_{j0}\leq \dots \leq k_{jn_j} + M_j\leq\dots,
\end{split}
\end{equation}
and a martingale $\{S_m\}$ such that:  \textit{(a)} for any $n\geq0$ there exists $j\geq0$ such that $2^{-m\beta}S_m \geq -2^{-n}$ for $m \geq k_{jn_j}+M_j$, and \textit{(b)} $2^{-m\beta}S_m \geq \frac13$ for at least one number $m$ between $k_{j0}$ and $k_{jn_j}+M_j$ on a large portion of $[0,1)$. We start describing the building block of our construction. \par
\textit{Block construction.}\;\par Given a dyadic interval $J$ with $|J| = 2^{-K}$ and a number  $\delta>0$ we define a \textit{building block} $W(\delta,J)$ as follows.\par
Consider a nested sequence of dyadic subintervals of $J$ that shrinks to its left end-point. In other words, let $J_0:= J$, and, given $J_{k-1}$ define $J_k := J_{k-1}^-$, $k \geq 1$ (where $I^-$ is the left half-interval of $I$). Let $M = M(\delta) := \left[\frac{\log\frac{1}{2\delta}}{(1-\beta)\log 2}\right]+1$, so that
\[
\frac12 \leq 2^{M(1-\beta)}\cdot\delta \leq\frac{1}{2^{\beta}}.
\]
Now let $h_I$ be a (slightly renormalized) Haar function corresponding to a dyadic interval $I$, $h_I(x) = 2\chi_{I^-} - \chi_{I}$, and define
\[
s_{k,J}(x) := \delta\cdot2^{K\beta}\cdot 2^{k}h_{J_k}(x), \quad 0\leq k \leq M.
\]
Since $|J_k| = 2^{-K-k}$, then, clearly, $s_{k,J}$ is a martingale difference of rank $K+k$, and
\begin{equation}\label{e:9}
\|2^{-(K+k)\beta}\sum_{m=0}^ks_{m,J}\|_{\infty} \leq  2\delta 2^{k(1-\beta)} \leq 2^{1-\beta},\quad 0\leq k \leq M.
\end{equation}
On the other hand
\begin{equation}\label{e:10}
2^{-(K+k)\beta}\sum_{m=0}^ks_{m,J} \geq -2^{-k\beta}\cdot\delta \geq -\delta.
\end{equation}
Define
\[
W(\delta,J) := \sum_{k=0}^Ms_{k,J},
\]
and observe that 
\begin{equation}\label{e:11}
2^{-(M+K)\beta}\|W(\delta,J)\|_{\infty} \leq 2^{1-\beta},
\end{equation}
and
\begin{equation}\label{e:1111}
2^{-(M+K)\beta}W(\delta,J)(x) \geq \frac12\left(1-2^{-M}\right),\quad x\in J_M.
\end{equation}
In particular, $|J_M| = 2^{-M}|J|$. To summarize, we have constructed a step function $W(\delta,J)$ supported on $J$ whose values are $-\delta2^{K\beta}$ on $J\setminus J_M$, and $\delta2^{K\beta}(2^M-1)$ on $J_M$. Since $2^{M(1-\beta)}\delta\approx 1$, we have $\delta 2^{K\beta} \approx |J|^{-\beta}2^{M(\beta-1)}$ and therefore $\delta2^{K\beta}(2^M-1)\approx |J_{M}|^{-\beta}$ \par
\textit{Arranging the blocks, first step}. \par
Let $\delta_j := 2^{-j-2},\; j\in\mathbb{Z}_+$. We define a (very lacunary) sequence $k_{mn}$ of numbers in the following way. Put $k_{00} := 0$, $J = [0,1)$, and
\[
S_{M(\delta_0)} := W(\delta_0,J).
\]
Now let $k_{01}$ be such that
\[
2^{-k_{01}\beta}\|S_{M(\delta_0)}\|_{\infty} \leq \frac{\delta_0}{2}.
\]
Then we let 
\begin{equation}\notag
\begin{split}
&S_i := S_{M(\delta_0)},\quad M(\delta_0) \leq i \leq k_{01},\\
&S_{k_{01}+M(\delta_0)} := S_{M(\delta_0)} + \sum_{J\in \mathcal{D}_{k_{01}}}W(\delta_0,J),
\end{split}
\end{equation}
We remind that $\mathcal{D}_i$ is the collection of dyadic intervals of rank $i$.\par
We continue iterating the procedure. To elaborate, assume we defined the numbers $k_{0n}$ and the martingale $S_i$ with $0\leq i \leq k_{0n}+M(\delta_0)$. Then we pick $k_{0(n+1)}$ such that
\[
2^{-k_{0(n+1)}\beta}\|S_{k_{0n}+M(\delta_0)}\|_{\infty} \leq \frac{\delta_0}{2},
\]
and
\begin{equation}\notag
\begin{split}
&S_i := S_{k_{0n}+M(\delta_0)},\quad k_{0n}+M(\delta_0) \leq i < k_{0(n+1)},\\
&S_{k_{0(n+1)}+M(\delta_0)} := S_{k_{0n}+M(\delta_0)} + \sum_{J\in \mathcal{D}_{k_{0(n+1)}}}W(\delta_0,J).
\end{split}
\end{equation}
We repeat the construction until we have $n=n_0 := \left[\frac{\log(1-2^{-M(\delta_0)})}{\log \delta_0}\right]+1$.\par
\textit{Arranging the blocks, second step.}\\
We continue to iterate, now also with respect to the parameter $j$.
Assume that we have defined a sequence of numbers $\{k_{mn}\}_{m=0}^{j-1} = 
\{\{k_{0n}\}_{n=0}^{n_0},\dots,\{k_{(j-1)n}\}_{n=0}^{n_{j-1}}\}$ and a sequence of partial sums $\{S_i\},\;
 i=0,\dots, k_{(j-1)n_{j-1}}+ M(\delta_{j-1})$. We apply the procedure from the previous step, now using $\delta_j$ in place of $\delta_0$. In other words, we fix a number $k_{j0} \geq k_{(j-1)n_{j-1}}$ such that
\[
2^{-k_{j0}\beta}\|S_{k_{(j-1)n_{j-1}}+ M(\delta_{j-1})}\| \leq \frac{\delta_j}{2},
\]
and define $S_i$ for $k_{(j-1)n_{j-1}}+M(\delta_{j-1}) \leq i \leq k_{j0} + M(\delta_j)$ as above. Then we proceed to $k_{j1}$ and so on, until we have $n=n_j = \left[\frac{\log(1-2^{-M(\delta_j)})}{\log \delta_j}\right]+1$ (by our assumptions $m_j = M(\delta_j)\approx j$, and $n_j \approx j2^{j}$).\par
\textit{Behaviour of $\{S_m\}.$}\par
First we claim that $S_i$ satisfies the growth condition, that is 
\[
\sup_i 2^{-i\beta}\|S_i\|_{\infty} \leq 2^{1-\beta}.
\]
Indeed, fix a number $i$ and consider the largest $k_{jn}$ such that $k_{jn} \leq i$. We have two options: \textit{(a)} $k_{jn}+M(\delta_j) < i$, and \textit{(b}) $k_{jn}+M(\delta_j) \geq i$. For the option (a) the martingale just stops until we hit the next number $k_{j(n+1)}$ or $k_{(j+1)0}$, in any case, clearly, $S_i = S_{k_{jn}+M(\delta_j)}$, and we have
\begin{equation}\notag
\begin{split}
&2^{-i\beta}\|S_i\|_{\infty} = 2^{-i\beta}\|S_{k_{jn}+M(\delta_j)}\|_{\infty} \leq 2^{-(k_{jn}+M(\delta_j))\beta}\|S_{k_{jn}+M(\delta_j)}\|_{\infty} \leq\\
&2^{-(k_{jn}+M(\delta_j))\beta}\left(\|S_{m}\|_{\infty} + \|S_{k_{jn}+M(\delta_j)}-S_m\|_{\infty}\right),
\end{split}
\end{equation}
where $m = m(n,j)$ is either $k_{j(n-1)} + M(\delta_j)$, if $n\geq1$, or $k_{(j-1)n_{j-1}} + M(\delta_{j-1})$, if $n=0$. In both cases $k_{jn}$ was chosen in such a way that 
\[
2^{-(k_{jn}+M(\delta_j))\beta}\|S_m\|_{\infty} \leq 2^{-k_{jn}\beta}\|S_m\|_{\infty} \leq \frac{\delta_j}{2}.
\]
On the other hand, by construction we have
\[
S_{k_{jn}+M(\delta_j)}-S_m = \sum_{J\in \mathcal{D}_{k_{jn}}}W(\delta_j,J),
\]
hence $\|S_{k_{jn}+M(\delta_j)}-S_m\|_{\infty} =  \|W(\delta_j,J)\|_{\infty}$ for any $J\in \mathcal{D}_{k_{nj}}$. By our choice of $W(\delta_j,J)$ (see \eqref{e:11}) we have
\[
2^{-(k_{nj}+M(\delta_j))\beta}\|W(\delta_j ,J)\|_{\infty} \leq 2^{1-\beta}.
\]
Option (b) is dealt in the same way, only now we use estimate \eqref{e:9} instead.\par
Next we aim to show that 
\[
\liminf_{i\rightarrow\infty} 2^{-i\beta}S_i(x) \geq 0,\quad x\in [0,1).
\]
Again, it follows from our construction, since the martingale $S_i$ consists of very sparse and independent pieces, and by the choice of $k_{jn}$ we always can consider only the tail end of it. In particular, if $i\geq k_{jn}+M(\delta_j)$ for some $j,n$, then by \eqref{e:10} we have $2^{-i\beta}W(\delta_j,J) \geq -2\delta_j$ for any $J\in \mathcal{D}_{k_{jn}}$, hence using the previous argument we get $2^{-i\beta}S_i \geq -3\delta_j$, which proves the estimate, as well as the last part of the statement.\par
Finally we want to estimate the size of the set $E$ of points $x \in \mathbb{R}$ where $\limsup_{i\rightarrow\infty}2^{-i\beta}S_i(x) \geq \frac15$. Fix a pair of numbers $j\in\mathbb{Z}_+$ and $0\leq n\leq n_j-1$. Since, as before,
\[
2^{-(k_{jn}+M(\delta_j))\beta}\|S_{k_{jn}+M(\delta_j)}\|_{\infty} \geq 2^{-(k_{jn}+M(\delta_j))\beta}\|W(\delta_j,J)\|_{\infty} - \frac{\delta_j}{2}
\]
for any $J\in \mathcal{D}_{k_{jn}}$, we can only consider the respective building block $W(\delta_j,J)$. Now, if $|J| = 2^{-k_{jn}}$, we have seen in \eqref{e:1111} that $2^{-(k_{jn}+M(\delta_j))\beta}W(\delta_j,J) \geq \frac{1}{4}$ on the interval $J_{M(\delta_j)}$ with $|J_{M(\delta_j)}| = 2^{-M(\delta_j)}|J|$. On the other hand, if $I$ is the dyadic interval of the next construction step in $J$, that is $|I| = 2^{-k_{j(n+1)}}, \; I\subset J$, again by \eqref{e:1111} we have  $2^{-(k_{j(n+1)}+M(\delta_j))\beta}W(\delta_j, I) \geq \frac14$ on $I_{M(\delta_j)}$. Denote by $\mathcal{F}(J)$ the set of all such intervals, that is, 
\[
\mathcal{F}(J) = \{I_{M(\delta_j)}\subset I: I\in \mathcal{D}_{k_{j(n+1)}}(J)\},
\]
where $\mathcal{D}_m(J)$ is the collection of dyadic intervals of rank $m$ that lie inside $J$. The intervals in $\mathcal{F}(J)$ are disjoint, and they are uniformly distributed over $J$ (for any $I\in\mathcal{D}_{k_{j(n+1)}}(J)$ recall that $I_{M(\delta_j)}$ is a leftmost dyadic subinterval of $I$ of rank $k_{j(n+1)}+M(\delta_j)$). It follows that

\begin{equation}\label{e:2112}
\begin{split}
&\left|\left(\bigcup_{\mathcal{F}(J)}I_{M(\delta_j)}\right)\setminus J_{M(\delta_j)}\right| = \sum_{I_{M(\delta_j)}\in \mathcal{F}(J),\;I_{M(\delta_j)}\subset J\setminus J_{M(\delta_j)}}|I_{M(\delta_j)}| =\\
&(2-2^{-M(\delta_j)})2^{-M(\delta_j)}|J\setminus J_{M(\delta_j)}|.
\end{split}
\end{equation}
An interval $I'$ is called \textit{$\delta_j$-special}, if there exists a number $0\leq n\leq n_j$ and an interval $J \in \mathcal{D}_{k_{jn}}$ such that $I' = J_{M(\delta_j)}$, that is $I'$ is the left-most dyadic subinterval of $J$ of rank $k_{jn}+M(\delta_j)$. The collection of $\delta_j$-special intervals is denoted by $\mathcal{F}_j$. As before, $|I'|^{\beta}W(\delta_j,J) \geq \frac{1}{4}$ on $I'$, and therefore $|I'|^{\beta}S(I') \geq \frac15$ (where $S(I) := S_{i}(x)$ with $x\in I$ and $i = \log_2|I|^{-1}$). It follows from \eqref{e:2112} that
\[
\left|\bigcup_{I'\in\mathcal{F}_j}I'\right| \geq 1-(1-2^{-M(\delta_j)})^{n_j}.
\]
Therefore the set $F_j$ of points $x\in [0,1)$  where
\[
2^{-(k_{jn}+M(\delta_j))\beta}\sum_{J\in\mathcal{D}_{k_{jn}}}W(\delta_j,J)(x) \leq \frac{1}{4}
\]
for all $n=0,\dots, n_j,$ has small Lebesgue measure, namely
\[
|F_j| \leq (1-2^{-M(\delta_j)})^{n_j} \lesssim \delta_j
\]
by our choice of $n_j$. Hence
\[
\left|\left\{x: 2^{-i\beta}S_i(x) \leq \frac15,\; k_{j0}\leq i\leq k_{jn_j}\right\}\right| \lesssim \delta_j.
\]
Since $\sum_j\delta_j \leq 1$, we see immediately that
\[
\left|\left\{x:\;\limsup_{i\rightarrow\infty}2^{-i\beta}S_i(x) \leq \frac15\right\}\right| = 0.
\]
We make another observation which will be useful later. Given a $\delta_j$-special interval $I'$ consider the dyadic interval $\tilde{I}$ of the same length that lies immediately on the left of $I'$, in other words, if $I' = [i2^{-m}, (i+1)2^{-m})$, then $\tilde{I} := [(i-1)2^{-m}, i2^{-m})$ (if the left end-point of $I'$ is $0$, we put $\tilde{I} := \emptyset$, so the intervals that fall out of $[0,1)$ are discarded). These intervals are called \textit{left-$\delta_j$-special}, and their collection is denoted by $\tilde{\mathcal{F}}_j$. Arguing as above we see that
\begin{equation}\label{e:2111}
\left|[0,1)\setminus\left(\bigcup_{\tilde{I}\in\tilde{\mathcal{F}}_j}\tilde{I}\right)\right| \leq 2(1-2^{-M(\delta_j)})^{n_j}\lesssim \delta_j,
\end{equation}
so that almost every point $x\in [0,1)$ lies in $\bigcup_{\tilde{I}\in\tilde{\mathcal{F}}_j}\tilde{I}$ for infinitely many $j\in\mathbb{Z}_+$.
\end{proof}

Now we are ready to prove Theorem \ref{t:contraexemple}.\par

\begin{proof}[Proof of Theorem \ref{t:contraexemple}] 
Fix $\beta:= 1-\alpha$. Consider the martingale $\{S_n\}$ constructed in Lemma \ref{l:contraexemple}. We can assume $S_0 = 0$. We will define a function $f$ defined in the real line as follows. Let $f(0)=0$. The relation $f(b_n)-f(a_n) := 2^{-n}S_n(I)$ for any $I= [a_n,b_n)\in \mathcal{D}_n$, $n\geq 0$, defines $f$ on the dyadic points of $[0,1]$ and we extend $f$ to non-dyadic points of $[0,1]$ by continuity. Observe that since $S_0 = 0$ we have $f(0)= f(1)=0$. Finally we extend $f$ from $[0,1]$ to the whole real line by periodicity. Let us prove that $f\in \Lambda_{\alpha}$. Fix a point $x\in \mathbb{R}$ and a number $0 < h \leq 1$. We aim to show that $|f(x+h) - f(x)| \leq Ch^{\alpha}$ for some absolute constant $C>0$. There exists an increasing sequence of dyadic-rational points $\{a_k\}_{k\in\mathbb{Z}}$ such that $[a_{k-1},a_{k})\in \mathcal{D}$, $\lim_{k\rightarrow-\infty}a_k = x$, $\lim_{k\rightarrow +\infty} a_k = x+h$, and for any $n\in\mathbb{N}$ there exists at most $4$ dyadic intervals of rank $n$ of the form $[a_{k-1},a_{k})$. In other words, we consider a Whitney decomposition of the interval $[x,x+h)$ with $\{a_k\}$ being the endpoints of the  corresponding dyadic intervals. Given $k\in\mathbb{Z}$ denote by $r_k$ the length of the interval $[a_{k-1},a_{k})$, that is $r_k = a_{k}-a_{k-1}$. Clearly,
\begin{equation}\notag
\begin{split}
&|f(x+h) - f(x)| = \left|\sum_{k\in\mathbb{Z}} \left(f(a_k) - f(a_{k-1})\right)\right| \leq \sum_{k\in\mathbb{Z}} \left|f(a_k) - f(a_{k-1})\right| =\\ &=
\sum_{k\in\mathbb{Z}} r_k|S([a_{k-1},a_{k}))|.
\end{split}
\end{equation}
Since by construction $\sup_n 2^{-n (1 - \alpha)} \|S_n\|_{\infty} < \infty $, and the amount of points $a_k$ generating the dyadic intervals of rank $n$ is bounded, there exists a constant $C= C(\alpha) >0 $ such that 
\[
|f(x+h) - f(x)| \leq C\sum_{n\geq \log_2\frac{1}{h}}2^{-n}2^{n(1-\alpha)} \leq \frac{C}{1- 2^{- \alpha}} h^{\alpha},
\]
so $f$ belongs to the corresponding H\"{o}lder class $\Lambda_{\alpha}$.
Next we show that 
\begin{equation}\label{e:201}
\liminf_{h\rightarrow0^+}\frac{f(x+h)-f(x)}{h^{\alpha}} = 0,\;\; x\in[0,1).
\end{equation}
Fix any $x\in [0,1)$ and an arbitrarily small $\delta>0$. By the last part of Lemma \ref{l:contraexemple} there exists a number $N$ such that $2^{-n(1-\alpha)}S_n(t) \geq -\delta$ for any $n\geq N$ and $t\in [0,1)$. Now fix any $0< h\leq 2^{-N}$, and consider the Whitney decomposition of $[x,x+h)$ as before. Clearly, $r_k \leq 2^{-N}$ for all $k\in\mathbb{Z}$, therefore we have
\begin{equation}\notag
\begin{split}
&f(x+h) - f(x) = \sum_{k\in\mathbb{Z}} \left(f(a_k) - f(a_{k-1})\right)= \sum_{k\in\mathbb{Z}} r_k\frac{f(a_k) - f(a_{k-1})}{r_k}=\\
&\sum_{k\in\mathbb{Z}} r_kS([a_{k-1},a_{k})) = \sum_{k\in\mathbb{Z}} 2^{-n_k}S([a_{k-1},a_{k})) \geq -\delta\sum_{k\in\mathbb{Z}} 2^{-n_k}2^{n_k(1-\alpha)},
\end{split}
\end{equation}
where $2^{-n_k} = r_k$ and $\sup_kr_k\leq h$. Since the numbers $n_k$ do not accumulate (we recall that for any $n$ there are at most four numbers $n_k=n$), it follows that
\[
\sum_{k\in\mathbb{Z}} 2^{-n_k}2^{n_k(1-\alpha)} \leq Ch^{\alpha}
\]
for some absolute constant $C>0$, and \eqref{e:201} follows immediately.\par
It remains to show that for almost every $x \in [0,1]$ we have 
\begin{equation}\label{e:202}
\limsup_{h\rightarrow0^+}\frac{f(x+h)-f(x)}{h^{\alpha}} > \frac{1}{20}. 
\end{equation}
Fix a point $x\in [0,1)$ and a number $N$ such that $2^{-n(1-\alpha)}S_n(t) \geq -\frac{1-\alpha}{40}$ for any $n\geq N$ and $t\in [0,1)$. It follows from \eqref{e:2111} that almost every $x$ belongs to infinitely many left-$\delta_j$-special intervals, in particular there is an increasing sequence $\{j_m(x)\}_{m=0}^{\infty}$ such that $x\in \tilde{I}_m(x)\in\tilde{\mathcal{F}}_{j_m}$ and $|\tilde{I}_m| \leq 2^{-N}$. Now for any $j_m$ we define $h_m$ in such a way that $x+h_m$ is the right end-point of the $\delta_{j_m}$-special interval $I_m$ corresponding to $\tilde{I}_m$. In other words, if $\tilde{I}_m = [(i-1)|\tilde{I}_m|, i|\tilde{I}_m|)$ for some $i\in \mathbb{Z}_+$, then $h_m := (i+1)|\tilde{I}_m| - x$. Since $I_m$ is $\delta_{j_m}$-special, we have $|I_m|^{1-\alpha}S(I_m) \geq \frac15$. Consider a Whitney-type decomposition of $[x,x+h_m)$ generated by $\{a_k\}_{k\in\mathbb{Z}}$ as above. In this case, since $x+h_m$ is dyadic-rational, we assume $a_0 = a_1 =\dots = x+h_m$, also, clearly, $[a_{-1},a_0) = I_m$ and $a_k-a_{k-1}= r_k \leq |I_m|\leq 2^{-N}$ for any $k\leq0$. In particular, $r_kS([a_{k-1},a_{k})) \geq -r_k^{\alpha}\frac{1-\alpha}{40},\; k<0$. We therefore have
\begin{equation}\notag
\begin{split}
&f(x+h_m) - f(x) = \sum_{k\leq0} \left(f(a_k) - f(a_{k-1})\right)= \\
& = r_0\frac{f(a_0) - f(a_{-1})}{r_0} + \sum_{k<0} r_k\frac{f(a_k) - f(a_{k-1})}{r_k}=\\
& = |I_m| S(I_m) + \sum_{k<0} r_kS([a_{k-1},a_{k})) \geq \frac15|I_m|^{\alpha} - \frac{1-\alpha}{40}\sum_{k<0} r_k^{\alpha}.
\end{split}
\end{equation}
Since for any given rank there are at most $4$ dyadic intervals of this rank of  the form $[a_{k-1} , a_k)$, we have 
$$\sum_{k<0} r_k^{\alpha} \leq 4\sum_{n\geq \log_2|I_m|^{-1}}2^{-n\alpha} \leq \frac{4}{1-\alpha}|I_m|^{\alpha} . 
$$ 
Hence
\[
f(x+h_m) - f(x) \geq \frac15|I_m|^{\alpha} - \frac{1}{10}|I_m|^{\alpha} = \frac{1}{10}|I_m|^{\alpha} \geq \frac{1}{20}h_m^{\alpha},
\]
because $h_m \leq |I_m| + |\tilde{I}_m| = 2|I_m|$. This finishes the proof of Theorem \ref{t:contraexemple}.
   
   \end{proof}
   
\section{Proof of Theorem \ref{p:51}}
We will construct the function $f$ via a rarefied (with respect to space variable) and lacunary (with respect to frequency scale variable) wavelet series. In fact it will be an analogue of the classical Weierstrass functions which admits better control over the individual atoms. We start by defining the base wavelet $\varphi\in C^{\infty}(\mathbb{R})$ that satisfies the following conditions
\begin{equation}\notag
\begin{split}
& \supp\varphi= \left[-\frac12,\frac12\right],\;\;
\varphi \equiv 1\;\textup{on}\; \left[-\frac{1}{16},\frac{1}{16}\right],\;\;
\varphi \equiv -1\; \textup{on}\; \left[-\frac{7}{16},-\frac38\right]\cup\left[\frac38,\frac{7}{16}\right],\\
&\int_{\mathbb{R}}x^n\varphi(x)\,dx = 0,\;0\leq n\leq 2.
\end{split}
\end{equation}
It is easy to verify (see e.g. \cite{HT91}) that for any sequence $\{c_{jk}\},\; j\in\mathbb{Z}, k\in\mathbb{Z}_+$, satisfying $ |c_{jk}| \leq 2^{-k\alpha}$, $ k\in\mathbb{N}
$, the function
\[
f := \sum_{j\in\mathbb{Z},k\in\mathbb{N}}c_{jk}\varphi_{jk}, \quad \textup{where}\;\;\varphi_{jk}(t) := \varphi(2^kt-j),
\]
belongs to $\Lambda_{\alpha}$. 

We consider a {\it superlacunary} sequence $k_{n}$ of positive integers that will be defined by induction. We put $k_1:= 1$. We set $k_n \geq k_{n-1}+4$ to satisfy a certain condition \eqref{e:522} that we announce in a few lines. Next we put $c_{jk} := 2^{-k\alpha}$, if $k=k_n$ for some $n\in\mathbb{N}$, and $c_{jk} \equiv0$ otherwise, and we let
\[
f := \sum_{n=1}^{\infty}\sum_{j\in\mathbb{Z}}c_{jk_n}\varphi_{jk_n}.
\]
For any $m\geq 2$ we define
$S_m := \sum_{n=1}^{m-1}\sum_{j\in\mathbb{Z}}c_{jk_n}\varphi_{jk_n}$ and $R_m:= \sum_{n=m}^{\infty}\sum_{j\in\mathbb{Z}}c_{jk_n}\varphi_{jk_n}$ to be the main part and the tail of the series representing $f$. 

Assume we have defined $k_n$ for $n=1\dots m-1$ (and therefore $S_m$) for some $m\geq2$. We pick $k_m$ to satisfy the following conditions:
\begin{equation}\label{e:522}
\begin{split}
&2^{-k_m}\cdot\|S_m'\|_{\infty} \leq \varepsilon 2^{-k_m\alpha}\\
&\sup_{|\theta|\leq 10\cdot2^{-k_m}}|S'_m(t_0+\theta)| \leq \varepsilon,\;\; \textup{for every}\; t_0\;\textup{such that}\; S_m'(t_0) = 0
\end{split}
\end{equation}
for some very small absolute constant $\varepsilon>0$ to be chosen later. Observe that for any $m$ the functions $\varphi_{jk_m}$ have disjoint supports, and there are nested sequences  $\{J^{\pm}_{n}\}$ of intervals of length $\frac{1}{8}2^{-k_n}$ such that $\varphi_{j^{\pm}_nk_n} \equiv \pm 1$ on $J^{\pm}_n$ for some $j^{\pm}_n$ with $n\leq m$. 

Fix any point $x\in\mathbb{R}$. Given $m\in\mathbb{N}$ there exist four numbers $r_{m}^{\pm} = r_{m}^{\pm}(x)$, $\rho_m^{\pm} =\rho_m^{\pm}(x)$,  such that $R_{m}(x+r_m^{+}) = \sup_{2^{-k_m}<t\leq 2^{-k_m+1}}R_m(x+t)$, $R_{m}(x+r_m^{-}) = \inf_{2^{-k_m}<t\leq 2^{-k_m+1}}R_m(x+t)$, and $R_{m}(x-\rho_m^{+}) = \sup_{2^{-k_m}<t\leq 2^{-k_m+1}}R_m(x-t)$, $R_{m}(x-\rho_m^{-}) = \inf_{2^{-k_m}<t\leq 2^{-k_m+1}}R_m(x-t)$. In other words, $x+r_m^{\pm}$ is the maximum/minimum point of the  $2^{-k_m}$-periodic function $R_m$ on the interval $[x+2^{-k_m}, x+2^{-k_m+1}]$ (and $x-\rho_m^{\pm}$ on the interval $[x-2^{-k_m+1},x-2^{-k_m}]$ respectively). Clearly $2^{-k_m}\leq |r_m^{\pm}|,|\rho_m^{\pm}|\leq 2^{-k_m+1}$, and
\[
R_m (x-\rho_m^{\pm}) = R_m(x+r_m^{\pm}) = \pm\sum_{n=m}^{\infty}2^{-k_n\alpha}.
\]

Clearly
\[
\frac{f(x+r^{\pm}_m) - f(x)}{r^{\pm}_m} = \frac{S_m(x+r^{\pm}_m) - S_m(x)}{r^{\pm}_m} + \frac{R_m(x+r^{\pm}_m) - R_m(x)}{r^{\pm}_m} := (I^{\pm}) + (II^{\pm}),
\]
and we have $(II^+) \geq0$, $(II^-)\leq0$ by the definition of $r_m^{\pm}$. Consider the following possible situations: 
\begin{itemize}
\item[\textit{(i)}] For one of the numbers $r_m^{\pm}$ we have
\begin{equation}\label{e:553}
\left|\frac{f(x+r^{\pm}_m) - f(x)}{r^{\pm}_m}\right| \leq 1.
\end{equation}
\item[\textit{(ii)}] We have 
\[
\frac{f(x+r^{+}_m) - f(x)}{r^{+}_m} > 1,\quad \frac{f(x+r^{-}_m) - f(x)}{r^{-}_m} < -1,
\]
or
\[
\frac{f(x+r^{+}_m) - f(x)}{r^{+}_m} <-1,\quad \frac{f(x+r^{-}_m) - f(x)}{r^{-}_m} > 1.
\]
\item[\textit{(iii)}] For both $r_m^{\pm}$
\begin{equation}\label{e:554}
\frac{f(x+r^{\pm}_m) - f(x)}{r^{\pm}_m} > 1
\end{equation}
or
\begin{equation}\notag
\frac{f(x+r^{\pm}_m) - f(x)}{r^{\pm}_m} <-1. 
\end{equation}

\end{itemize}
\textit{Case $\textit{(i)}$}. Assume that the inequality holds, say, for $r^+_m$. We claim that in this case
\begin{equation}
\label{nova}
\frac{f(x+r^{-}_m) - f(x)}{r^{-}_m} \leq -\frac{1}{2}2^{k_m(1-\alpha)}.
\end{equation}
Indeed, by \eqref{e:522} the sequence $\{k_n\}$ is chosen in such a way that for $0\leq \theta \leq 2^{-k_m+1}$ we have
\[
\left|S_m(t+\theta)-S_m(t)\right| \leq \int_{0}^{\theta}|S'(t+s)|\,ds\leq 2\varepsilon\cdot 2^{-k_m\alpha}, \quad t\in\mathbb{R}.
\]
 On the other hand, clearly,
\[
\sup_{t\in\mathbb{R}}|R_m(t)| = \sum_{n=m}^{\infty} 2^{-k_n\alpha}  \geq 2^{-k_m\alpha}.
\]
Take $\varepsilon \leq \frac{1}{200}$. It follows immediately that $|S_m(x+r_m^+) - S_m(x+r_m^-)| \leq \frac{1}{50}2^{-k_m\alpha}$, therefore
\begin{equation}\notag
\begin{split}
&\frac{f(x+r^{-}_m) - f(x)}{r^{-}_m} = \frac{r^+_m}{r^-_m}\frac{f(x+r^{+}_m) - f(x)}{r^{+}_m} + \frac{f(x+r^-_m)-f(x+r^+_m)}{r^{-}_m} =\\
&= \frac{r^+_m}{r^-_m}\frac{f(x+r^{+}_m) - f(x)}{r^{+}_m} + \frac{S_m(x+r^-_m)-S_m(x+r^+_m)}{r^{-}_m} + \frac{R_m(x+r^-_m)-R_m(x+r^+_m)}{r^{-}_m}.
\end{split}
\end{equation}
Since $\frac{r^+_m}{r^-_m}\leq 2$, we obtain
\[
\left|\frac{r^+_m}{r^-_m}\frac{f(x+r^{+}_m) - f(x)}{r^{+}_m} + \frac{S_m(x+r^-_m)-S_m(x+r^+_m)}{r^{-}_m}\right| \leq \frac{1}{10}2^{k_m(1-\alpha)}
\]
On the other hand, $R_m(x+r_m^-) - R_m(x+r_m^+) = -2\sum_{n=m}^{\infty}2^{-k_n\alpha} \leq -2^{-k_m\alpha + 1}$, therefore we deduce
\[
\frac{f(x+r^{-}_m) - f(x)}{r^{-}_m} \leq \frac{R_m(x+r^-_m)-R_m(x+r^+_m)}{r^{-}_m} +  \frac{1}{10}2^{k_m(1-\alpha)} \leq -\frac{1}{2}2^{k_m(1-\alpha)}.
\]
This proves \eqref{nova} and we put $h_m := r^{-}_m$ and $h'_m := r^{+}_m$. If \eqref{e:553} is attained at $r^-_m$, we repeat the argument above exchanging $r_m^+$ and $r_m^-$.\par
\textit{Case $(ii)$}.
Clearly there must exist a point $x+\tilde{r}_m$ between $x+r^+_m$ and $x+r^-_m$ such that
\[
f(x+\tilde{r}_m)-f(x) =0,
\]
we immediately put $h'_m := \tilde{r}_m$. On the other hand
\begin{equation}\label{e:5554}
\max \left\{|R_m(x+\tilde{r}_m)- R_m(x+r^+_m)|, |R_m(x+\tilde{r}_m)- R_m(x+r^-_m)| \right\} \geq \sup_{t\in\mathbb{R}}|R_m(t)| \geq 2^{-k_m\alpha}.
\end{equation}
Assume that the maximum is attained at $r_m^+$. Then
\[
f(x+r_m^+)-f(x) = f(x+r_m^+) - f(x+\tilde{r}_m) = S_m(x+r_m^+) - S_m(x+\tilde{r}_m) + R_m(x+r_m^+) - R_m(x+\tilde{r}_m),
\]
and arguing as in the case \textit{(i)} we have
\[
\left|\frac{f(x+r^{+}_m) - f(x)}{r^{+}_m}\right| \geq \frac{1}{2}2^{k_m(1-\alpha)}. 
\]
We then put $h_m:= r^{+}_m$. If the maximum in \eqref{e:5554} is attained at $r_m^-$, we argue similarly.\par
\textit{Case $(iii)$}. Assume we have \eqref{e:554} (the other option is dealt with exactly the same way). Since $R_m(x+r^-_m) - R_m(x) \leq 0$, the arguments above imply that $S_m(x+r^-_m)-S_m(x) \geq r_m^-$. We now show that $R_m(x) \leq0$. Indeed, by our choice of $\{k_n\}$ satisfying \eqref{e:522} the difference $|S_m(x+r^-_m)- S_m(x)|$ is dominated by $2^{-k_m\alpha} \leq -R_m(x+r^-_m)$. Hence the condition $R_m(x)\geq 0$ would immediately imply that $f(x+r^-_m)-f(x)\leq 0$ which contradicts our assumption.

Now we look at the minimum/maximum on the left of $x$. First we claim that both $S_m(x) - S_m(x-\rho_m^{+})$ and $S_m(x) - S_m(x-\rho^-_m)$ are positive. Assume it is not the case, say, for $\rho^+_m$, that is $S_m(x) - S_m(x-\rho_m^{+})\leq0$. Then $S_m'$ should vanish at some point $x+\theta\in [x-\rho^+_m,x+r^-_m]$. By our choice of $k_n$, see \eqref{e:522}, it follows immediately that
\[
\sup_{\theta\approx2^{-k_n}}|S'_m(x+\theta)| \leq \frac{1}{10}.
\]
Therefore
\[
\left|\frac{S_m(x+r^-_m)-S_m(x)}{r^-_m}\right| \leq \frac{1}{10},
\]
and
\[
\frac{f(x+r^{-}_m) - f(x)}{r^{-}_m} \leq \frac{1}{10} + \frac{R_m(x+r^-_m) - R_m(x)}{r^-_m} \leq \frac{1}{10},
\]
so we have a contradiction. This proves that $S_m(x) - S_m(x-\rho_m^{+})\geq0$. A similar argument shows that $S_m(x) - S_m(x-\rho^-_m)\geq0$.\par 
Since $R_m(x) \geq R_m(x-\rho^-_m)$, we obtain
\[
\frac{f(x) - f(x-\rho_m^-)}{\rho_m^-} = \frac{S_m(x) - S_m(x-\rho_m^-)}{\rho_m^-} + \frac{R_m(x) - R_m(x-\rho_m^-)}{\rho_m^-} \geq0.
\]
On the other hand,  since $R_m (x) \leq 0$ we have $R_m(x) - R_m(x-\rho^+_m) \leq -R_m(x-\rho^+_m)$. Hence, as in the previous cases,
\[
\frac{f(x) - f(x-\rho_m^+)}{\rho_m^+} \leq -\frac{1}{2}2^{k_m(1-\alpha)}.
\]
in particular there exists a point $x-\tilde{\rho}_m$ such that $f(x)-f(x-\tilde{\rho}_m)=0$.
We define $h_m:= -\rho_m^+$, and $h'_m := -\tilde{\rho}_m$.

\textbf{Remark.}\; We have constructed a function $f\in\Lambda_{\alpha}$ such that for every $x\in\mathbb{R}$ there exists a couple of sequences $h_m,h'_m$ that satisfy 
\begin{equation}\notag
\begin{split}
&\left|\frac{f(x+h'_m)-f(x)}{h'_m}\right| \leq 1\\
&\frac{|f(x+h_m)-f(x)|}{|h_m |^\alpha} \gtrsim 1.
\end{split}
\end{equation}
It follows from the construction that these two sequences can be chosen in such a way that they both lie on the same side of $x$ (right or left, but it \textit{depends} on the point $x$), but it is not immediately clear that we can fix the side beforehands, i.e. that we can pick such a function $f$ that both $h_m$ and $h'_m$ are, say, positive numbers. One therefore could ask, if for every function $f\in \Lambda_{\alpha}$ there exists at least one point $x$ such that either
\[
\liminf_{\theta\downarrow0}\frac{f(x+\theta)-f(x)}{\theta} = +\infty,
\]
or there exists a finite right derivative of $f$ at $x$.


\section{The Bloch Space}

Let $\mathcal{B}$ be the Bloch space of analytic functions $g$ in the unit disc $\mathbb{D}$ of the complex plane such that $\|g\|_{\mathcal{B}} = \sup_{z \in \mathbb{D}} (1-|z|^2) |g'(z)| < \infty$. Makarov found  a dictionary between Bloch functions and dyadic martingales which has been extremely useful. Given $g \in \mathcal{B}$ consider the dyadic martingale $\{S_n \}$ defined by 
\begin{equation}\label{blochmartingale}
S_n ( \theta) = \lim_{r \to 1} \frac{1}{|I|}\int_I g(r e^{2 \pi i\varphi}) d \varphi, \quad 0 \leq \theta \leq 1, 
\end{equation}
where $I$ is the dyadic interval of generation $n$ which contains $\theta$. It was proved that $\|g\|_* = \sup_n \|S_n - S_{n-1} \|_\infty$ defines an equivalent seminorm in $\mathcal{B}$. See \cite{Makarov}. Makarov also proved several results on the size of the set 
$$
\{\theta : \limsup_{r \to 1} \frac{|g (re^{2 \pi i \theta})|}{\alpha (1-r)} \geq C \}
$$
for various gauge functions $\alpha$. See Lemma 6.11 and the related Lemmas 6.5 and 6.7 in \cite{Makarov}. But these results do not seem to cover the following sharp estimate. 
\begin{corl}\label{Bloch}
Let $g$ be a function in the Bloch space with $\|g\|_* \leq 1$. For $0 < \eta < 1$ consider the set 
\begin{equation*}
    E(\eta)= \{ \theta \in [0,1]: \limsup_{r \to 1} \frac{|g (re^{2 \pi i \theta})|}{\log (1-r)^{-1}} \geq \eta \} . 
\end{equation*}
Then $dim E(\eta) \leq \Phi(\eta)$. 
\end{corl}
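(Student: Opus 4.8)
The plan is to transfer the estimate of Theorem \ref{t:th2} from the dyadic martingale $\{S_n\}$ associated with $g$ via \eqref{blochmartingale} to the radial growth of $g$ itself. Since $\|g\|_* \leq 1$, the martingale $\{S_n\}$ has increments bounded by $1$ in modulus, so Theorem \ref{t:th2} applies directly. The key point is therefore a comparison: if $g$ grows like $\eta\log(1-r)^{-1}$ along the radius at angle $\theta$, then the martingale $S_n(\theta)$ grows like $\eta n$ (up to lower order terms), so that $E(\eta)$ is contained, up to a set that does not affect the Hausdorff dimension, in the set $E(\eta')$ of Theorem \ref{t:th2} for every $\eta' < \eta$.

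First I would recall the standard estimate linking $g(re^{2\pi i\theta})$ with the dyadic averages $S_n(\theta)$. Writing $r_n = 1 - 2^{-n}$, one has $S_n(\theta) = |I_n(\theta)|^{-1}\int_{I_n(\theta)} g(r e^{2\pi i\varphi})\,d\varphi$ in the limit $r\to 1$, and the Bloch condition $\|g\|_{\mathcal{B}} \lesssim \|g\|_* \leq C$ gives $|g(r_n e^{2\pi i\theta}) - S_n(\theta)| \leq C'$ for an absolute constant $C'$, by integrating $(1-|z|)|g'(z)| \leq \|g\|_{\mathcal{B}}$ along a Whitney-type path from $r_n e^{2\pi i\theta}$ to points of the arc $I_n(\theta)$ at the same radius and then comparing with the average; this is exactly Makarov's dictionary. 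A second routine point is that for $1 - 2^{-n-1} \leq r \leq 1 - 2^{-n}$ one has $|g(re^{2\pi i\theta}) - g(r_n e^{2\pi i\theta})| \leq \|g\|_{\mathcal{B}}\log 2 \leq C''$, so the radial maximal behavior at scale $r$ is controlled by $S_n(\theta)$ with $n \approx \log_2(1-r)^{-1}$, up to an additive constant. Hence if $\theta \in E(\eta)$, then $\limsup_n |S_n(\theta)|/n \geq \eta$.

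Next I would dispose of the absolute value. The set where $\limsup_n S_n(\theta)/n \geq \eta$ is exactly $E(\eta)$ of Theorem \ref{t:th2}, with $H^{\Phi(\eta)}$-measure at most $1$. For the set where $\limsup_n (-S_n(\theta))/n \geq \eta$, apply Theorem \ref{t:th2} to the martingale $\{-S_n\}$, which also has increments bounded by $1$; this set likewise has $H^{\Phi(\eta)}$-measure at most $1$. Since $\{\theta : \limsup_n |S_n(\theta)|/n \geq \eta\}$ is contained in the union of these two sets, it has Hausdorff dimension at most $\Phi(\eta)$. Combining with the inclusion from the previous paragraph, $\dim E(\eta) \leq \Phi(\eta)$, which is the claim. (If one wants the cleaner Hausdorff measure bound, a small loss $\eta' < \eta$ must be taken to absorb the additive constants $C', C''$ into $\eta' n$ for $n$ large, so that $E(\eta) \subset E(\eta')$ up to finitely many scales; then let $\eta' \uparrow \eta$ using continuity of $\Phi$.)

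The main obstacle is purely the comparison estimate between $g(re^{2\pi i\theta})$ and $S_n(\theta)$ — one must be slightly careful that the martingale in \eqref{blochmartingale} is defined through the real part or the function itself consistently, and that the Bloch seminorm $\|g\|_{\mathcal{B}}$ is comparable to $\|g\|_*$ with an absolute constant so that the additive error is genuinely $O(1)$ and not growing with $n$. Once Makarov's dictionary is quoted correctly, the rest is an immediate application of Theorem \ref{t:th2} and poses no difficulty; there is no new probabilistic or geometric content beyond what the dyadic theorem already supplies.
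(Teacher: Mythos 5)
Your proposal follows essentially the same route as the paper: the paper simply quotes Makarov's estimate $g(re^{2\pi i\theta}) = S_n(\theta) + O(1)$ for $2^{-n-1} < 1-r \leq 2^{-n}$ and then invokes Theorem \ref{t:th2}. Your additional care with the absolute value (applying the theorem to $\{-S_n\}$ as well) and with absorbing the $O(1)$ error by passing to $\eta' < \eta$ and letting $\eta' \uparrow \eta$ only makes explicit what the paper leaves implicit, so the argument is correct and not a different approach.
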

\begin{proof}
Consider the dyadic martingale $\{S_n \}$ defined in \eqref{blochmartingale}. Makarov proved the fundamental estimate: for $2^{-n-1} < 1-r \leq 2^{-n}$ we have 
\begin{equation*}
 g(r e^{2 \pi i\theta}) = S_n (\theta) + O(1) , \quad 0 \leq  \theta \leq 1 . 
\end{equation*}
See \cite{Makarov}. Hence Corollary \ref{Bloch} follows from Theorem \ref{t:th2}.

\end{proof}

\end{document}